\journal{Signal Processing}
\newtheorem{thm}{Theorem}
\newtheorem{lem}[thm]{Lemma}
\newtheorem{remk}{Remark}
\newtheorem{assum}{Assumption}
\begin{document}

\begin{frontmatter}

\title{Cubic NK-SVD: An Algorithm for Designing Parametric Dictionary in Frequency Estimation}

\author[1]{Xiaozhi Liu}
\ead{xzliu@buaa.edu.cn}

\author[1]{Yong Xia \corref{cor1}}
\ead{yxia@buaa.edu.cn}

\cortext[cor1]{Corresponding author}

\affiliation[1]{organization={School of Mathematical Sciences},
            addressline={Beihang University}, 
            city={Beijing},
            postcode={100191},
            country={China}}

\begin{abstract}

We propose a novel parametric dictionary learning algorithm for line spectral estimation, applicable in both single measurement vector (SMV) and multiple measurement vectors (MMV) scenarios. 
This algorithm, termed {cubic Newtonized K-SVD (NK-SVD)}, extends the traditional K-SVD method by incorporating cubic regularization into Newton refinements.
The proposed Gauss-Seidel scheme not only enhances the accuracy of frequency estimation over the continuum but also achieves better convergence by incorporating higher-order derivative information. 
A key contribution of this work is the rigorous convergence analysis of the proposed algorithm within the Block Coordinate Descent (BCD) framework. To the best of our knowledge, this is the first convergence analysis of BCD with a higher-order regularization scheme.
Moreover, the convergence framework we develop is generalizable, providing a foundation for designing alternating minimization algorithms with higher-order regularization techniques.
Extensive simulations demonstrate that cubic NK-SVD outperforms state-of-the-art methods in both SMV and MMV settings, particularly excelling in the challenging task of recovering closely-spaced frequencies.
The code for our method is available at https://github.com/xzliu-opt/Cubic-NK-SVD.

\end{abstract}

\begin{keyword}
Parametric dictionary learning\sep line spectral estimation\sep Cubic Newton method\sep block coordinate descent

\end{keyword}

\end{frontmatter}

\section{Introduction}

The problem of line spectral estimation (LSE) has {gained} significant attention over the years due to its relevance in numerous signal processing applications, such as direction-of-arrival (DOA) estimation \cite{l1-svd} and channel estimation in wireless communications \cite{ce_app}.

Classical subspace methods, including multiple signal classification (MUSIC) \cite{music} and estimation of parameters by rotational invariant techniques (ESPRIT) \cite{esprit}, address the LSE problem by exploiting the low-rank structure of the autocorrelation matrix \cite{spectral-stoica}.
Under the assumption of Gaussian noise and complete data, these methods perform well when a sufficient number of snapshots are available. However, their effectiveness diminishes in challenging scenarios, such as with limited snapshots, closely spaced frequencies, unknown model orders, incomplete data, or low signal-to-noise ratio (SNR).

Recently, compressed sensing (CS) techniques have been employed to tackle the LSE problem \cite{cs}.
Methods such as $\ell_1$-SVD \cite{l1-svd} rely on a finite discretized dictionary to represent potential DOA estimates.
However, the accuracy of sparse signal reconstruction depends on the precise knowledge of the sparsifying dictionary, which is often impractical.
Moreover, CS methods require discretizing the continuous parameter space into a finite grid set, leading to grid mismatch errors \cite{chi-mismatch} when true parameters do not align with the predefined grid points. These methods are known as on-grid approaches.

To mitigate grid mismatch, off-grid methods have been introduced \cite{sure-ir,nomp,mnomp,ogsbi,sbl-de,superfast-lse}.
For example, super-resolution iterative reweighted (SURE-IR) algorithm addresses LSE using the majorization-minimization method but is computationally expensive.
In contrast, the Newtonized Orthogonal Matching Pursuit (NOMP) algorithm \cite{nomp} offers a faster solution by incorporating Newton-based refinement into the Orthogonal Matching Pursuit (OMP) framework \cite{omp}, with further extensions to multi-snapshot settings in the multi-snapshot NOMP (MNOMP) algorithm \cite{mnomp}.
In \cite{ogsbi,sbl-de,superfast-lse}, a Bayesian view is taken on the LSE problem.
Besides these efforts, another line of research focuses on gridless techniques, which avoid discretization by utilizing continuous dictionaries.
{The work in \cite{anm} presents an atomic norm minimization (ANM) approach to solve the LSE problem using continuous dictionaries, and this approach is extended to the MMV setting in \cite{anm-mmv}. Furthermore, \cite{emac} introduces a gridless method based on enhanced matrix completion (EMaC).}
Despite their theoretical advantages, these methods involve solving semidefinite programming (SDP), which can be computationally demanding.

{The aforementioned off-grid methods} {\cite{sure-ir,nomp,mnomp,ogsbi,sbl-de,superfast-lse}} can be {viewed as} specialized parametric dictionary learning {(DL)} methods \cite{para-dl,para-dl-gd}, where each atom is {parameterized} by a single frequency.
These methods typically employ an alternating minimization strategy between the parametric atoms and their corresponding gains.
{Some approaches directly apply parametric DL methods to LSE \cite{dl-dicrefcs-em,dl-imat-1bit}, but these are generally first-order algorithms that do not leverage second-order information.}
Moreover, they lack convergence analysis and utilize a parallel update strategy for all parametric atoms, without leveraging a Gauss-Seidel-like accelerated strategy, as employed in the K-SVD algorithm \cite{k-svd}.
The authors in \cite{nomp} also emphasize that the decision-feedback mechanism in this {alternative strategy} {can significantly improve performance by} handling interference among the underlying atoms.

In this paper, we address the LSE problem within a parametric DL framework to mitigate the effect of grid mismatch.
Inspired by the classical DL approach \cite{k-svd}, we extend the K-SVD algorithm \cite{k-svd} to the frequency estimation problem by incorporating the cubic regularization of Newton’s method \cite{cubic_nesterov_06}.
The proposed approach, termed cubic Newtonized K-SVD (cubic NK-SVD) algorithm, is suitable for both SMV and MMV scenarios.
Another contribution of our work is the first rigorous establishment of the convergence analysis of Block Coordinate Descent (BCD) \cite{bcd_bolte_14} using higher-order derivative information. This analytical framework is not only applicable to the analysis of our proposed cubic NK-SVD algorithm but can also be used in the design of general alternating minimization algorithms with higher-order regularization.
{Comprehensive numerical results validate the theoretical findings and demonstrate the superior accuracy and computational efficiency of the proposed cubic NK-SVD algorithm.}

The following notation is used throughout this paper:
$\boldsymbol{A}$ denotes a matrix;
$\boldsymbol{a}$ represents a vector;
$a$ is a scalar;
$\boldsymbol{A}^T$ and $\boldsymbol{A}^H$ represent the transpose and conjugate transpose of $\boldsymbol{A}$, respectively;
$\|\boldsymbol{A}\|_F$ is the Frobenius norm of $\boldsymbol{A}$;
$\|\boldsymbol{a}\|_p$ is the $p$-norm of the vector $\boldsymbol{a}$;
$\|\boldsymbol{a}\|$ specifically denotes the 2-norm of $\boldsymbol{a}$;
{$\operatorname{Diag}(\boldsymbol{a})$} denotes a diagonal matrix with the elements of vector $\boldsymbol{a}$ on its diagonal;
$\boldsymbol{I}_N$ is the $N\times N$ identity matrix;
$\boldsymbol{0}_{M\times N}$ represents the $M \times N$ all-zeros matrix;
The real part of a variable is denoted by $\Re{\{\cdot\}}$.

The rest of the paper is organized as follows.
In Section \ref{sec:problem}, we formulate the LSE problem as a parametric DL problem.
The proposed cubic NK-SVD algorithm is developed in Section \ref{sec:algorithm}.
Section \ref{sec:convergence} offers a convergence analysis based on the BCD framework. 
Simulation results are provided in Section \ref{sec:simulation}.
Finally, the paper is concluded in Section \ref{sec:conclusion}.

\section{Problem Formulation} \label{sec:problem}

{The problem of LSE involves observing a signal composed of multiple complex sinusoids, expressed as:}
\begin{equation}
	\boldsymbol{y}(t) = \boldsymbol{A}(\boldsymbol{\theta}) \boldsymbol{s}(t) + \boldsymbol{e}(t),\quad t = 1,\cdots, T,
	\label{smv}
\end{equation}
where $\boldsymbol{y}(t) = \left[y_1(t),  \cdots  ,y_N(t)\right]^T$, $\boldsymbol{\theta} = \left[\theta_1,  \cdots , \theta_K \right]^T$,  $\boldsymbol{s}(t) = \left[s_1(t),  \cdots , s_K(t)\right]^T$, and $\boldsymbol{e}(t) = \left[e_1(t),  \cdots , e_N(t)\right]^T$. The matrix $\boldsymbol{A}(\boldsymbol{\theta}) =\left[\boldsymbol{a}(\theta_1),  \cdots  ,\boldsymbol{a}(\theta_K)\right]^T$ is known as the array manifold, where $\boldsymbol{a}(\theta_k)$ represents the steering vector for the $k$-th source. The steering vector for a frequency $\theta \in [0, 2\pi)$ is defined as:
\begin{equation}
	\boldsymbol{a}(\theta) =\dfrac{1}{\sqrt{N}}\left[1 , e^{j \theta} \cdots,  e^{j (N-1) \theta}\right]^T,
	\label{eq:dft_atom}
\end{equation}
the model \eqref{smv} can be rewritten in a matrix form as
$$
\boldsymbol{Y} = \boldsymbol{A}(\boldsymbol{\theta}) \boldsymbol{S} + \boldsymbol{E},
$$
where $\boldsymbol{Y} = [\boldsymbol{y}(1),\cdots ,\boldsymbol{y}(T)]$ is the observation matrix consisting of $T$ observed vectors, $\boldsymbol{S} = [\boldsymbol{s}(1),\cdots ,\boldsymbol{s}(T)]$, and $\boldsymbol{E}$ is the noise matrix.

In the context of CS, the problem is further complicated when a sensing matrix $\boldsymbol{\Phi}\in\mathbb{C}^{M\times N}$ (with $M\ll N$) is applied. Here, the atoms transform into $\tilde{\boldsymbol{a}}(\theta) = \boldsymbol{\Phi} \boldsymbol{a}(\theta)$, and the new manifold becomes $\tilde{\boldsymbol{A}}(\boldsymbol{\theta}) =\left[\tilde{\boldsymbol{a}}(\theta_1),  \cdots  ,\tilde{\boldsymbol{a}}(\theta_K)\right]$. The goal of this paper is to estimate the number of sources $K$, the unknown frequencies $\boldsymbol{\theta}$, and the gains $\boldsymbol{S}$ based on the observed data $\boldsymbol{Y}$ and the sensing matrix $\boldsymbol{\Phi}$.

To cast the LSE problem as a parametric DL problem \cite{k-svd}, we introduce an overcomplete dictionary $\boldsymbol{D}(\boldsymbol{\tilde{\theta}})$ that covers all potential source locations.
Let $\boldsymbol{\tilde{\theta}} = [\tilde{\theta}_1,\cdots,\tilde{\theta}_R]$ be a grid sampling possible source locations, where the number of grid points $R$ is typically much larger than both the number of sources $K$ and the number of sensors $N$.
The parametric dictionary is constructed with steering vectors corresponding to each potential source location as its columns: $\boldsymbol{D}(\boldsymbol{\tilde{\theta}})=[\boldsymbol{a}(\tilde{\theta}_1),\cdots,\boldsymbol{a}(\tilde{\theta}_R)]$.

We represent the signal field using the matrix $\boldsymbol{X} = [\boldsymbol{x}(1),\cdots,\boldsymbol{x}(T)]$. In the MMV case, the model becomes:
$$
\boldsymbol{Y} = \boldsymbol{D}(\boldsymbol{\tilde{\theta}}) \boldsymbol{X}+\boldsymbol{E}.
$$

This overcomplete representation enables the reformulation of the LSE problem into the task of designing the parametric dictionary $\boldsymbol{D}(\boldsymbol{\tilde{\theta}})$ to achieve optimal sparse representations of $\boldsymbol{X}$ under strict sparsity constraints.

\section{Cubic NK-SVD Algorithm} \label{sec:algorithm}

In this section, we present the cubic NK-SVD algorithm, which is designed to estimate the unknown parameters $\boldsymbol{\theta}$ and the sparse matrix $\boldsymbol{X}$ by constructing a parametric dictionary $\boldsymbol{D}(\boldsymbol{\tilde{\theta}})$. This approach adapts the classical K-SVD algorithm \cite{k-svd} by incorporating Cubic Newton refinements to address the potential basis mismatch that arises from discretizing a continuous parameter space. For the sake of exposition, we begin by deriving it in the context of the MMV. The SMV case is simply a special instance of this formulation, obtained by setting the number of measurement vectors $T = 1$. The objective is to find the best possible parametric dictionary $\boldsymbol{D}(\boldsymbol{\tilde{\theta}})$ that provides a sparse representation of the given training set $\boldsymbol{Y}$. This leads to the following optimization problem:
\begin{equation}
	\begin{aligned}
		\min_{\boldsymbol{\tilde{\theta}}, \boldsymbol{X}} & \ \|\boldsymbol{Y} - \boldsymbol{D}(\boldsymbol{\tilde{\theta}}) \boldsymbol{X} \|_F^2 \\
		\text { s.t. } & \ \ \|{{\boldsymbol{x}}_i}\|_0 \leq {K},\ \forall i = 1,\cdots, T,
	\end{aligned}
	\label{eq:dl-problem}
\end{equation}
where ${{\boldsymbol{x}}_i}$ is the $i$-th column of the matrix $\boldsymbol{X}$.

{In our algorithm, we iteratively minimize the objective in \eqref{eq:dl-problem} by alternating between the sparse coding and atom update stages. During the sparse coding stage, the parametric dictionary $\boldsymbol{D}(\boldsymbol{\tilde{\theta}})$ is fixed, and the focus is on determining the optimal sparse representation $\boldsymbol{X}$.}
There are various algorithms for solving the sparse representation problem, such as OMP \cite{omp} and fast iterative shrinkage-thresholding algorithm (FISTA) \cite{fista}.
{In our algorithm, we select OMP \cite{omp} for its balanced performance in terms of computational complexity and representational accuracy.}
We employ an error bound $\epsilon$, determined by an estimate of noise power, as the stopping criterion, {eliminating the need for prior knowledge of model order $K$ \cite{omp}. Specifically, OMP expands the support until the residual error falls below the threshold $\epsilon$. In the atom update stage, we adopt a Gauss–Seidel-like accelerated scheme, where the atoms in $\boldsymbol{D}(\boldsymbol{\tilde{\theta}})$ are updated sequentially, and the corresponding coefficients are adjusted. The key distinction from the K-SVD algorithm \cite{k-svd} lies in the parametric structure of the atoms $\boldsymbol{a}(\tilde{\theta}_k)$ within the dictionary\footnote{The discussion here is not limited to Fourier atom \eqref{eq:dft_atom}. The extension to general atomic sets is {straightforward}.}.}
Moreover, updates are only required for the parameters $\boldsymbol{\tilde{\theta}}$ within the dictionary $\boldsymbol{D}(\boldsymbol{\tilde{\theta}})$.
Assuming that both $\boldsymbol{D}(\boldsymbol{\tilde{\theta}})$ and $\boldsymbol{X}$ are fixed except $\theta_k$ and the corresponding coefficients $\boldsymbol{x}_T^k$, 
the subproblem of estimating $\theta_k$ and $\boldsymbol{x}_T^k$ can be transformed into minimizing the following objective
\begin{equation}
	\begin{aligned}
		S(\tilde{\theta}_k, \boldsymbol{x}_T^k)=& \|\boldsymbol{Y} - \boldsymbol{D}(\boldsymbol{\tilde{\theta}}) \boldsymbol{X} \|_F^2 \\
		=& \|\boldsymbol{Y} -\sum_{j=1}^{R} \boldsymbol{a}(\tilde{\theta}_j) \boldsymbol{x}_T^j \|_F^2 \\
		=& \|(\boldsymbol{Y} -\sum_{j\neq k} \boldsymbol{a}(\tilde{\theta}_j) \boldsymbol{x}_T^j) - \boldsymbol{a}(\tilde{\theta}_k) \boldsymbol{x}_T^k \|_F^2\\
		=&  \|\boldsymbol{E_k} - \boldsymbol{a}(\tilde{\theta}_k) \boldsymbol{x}_T^k \|_F^2,
	\end{aligned}
	\label{eq:f_norm}
\end{equation}
where $\boldsymbol{x}_T^j$ is the coefficients that correspond to the $j$-th atom $\boldsymbol{a}(\tilde{\theta}_j)$, i.e., the $j$-th row of the matrix $\boldsymbol{X}$. The matrix $\boldsymbol{E_k}$ stands for the error for all the $T$ examples when the $k$-th atom is removed.

{Directly optimizing $S(\tilde{\theta}_k, \boldsymbol{x}_T^k)$ over both $\tilde{\theta}_k$ and $\boldsymbol{x}_T^k$ is a challenging task. To address this, we implement a two-stage procedure:
	(1) Coarse estimation stage: In this step, we first restrict $\tilde{\theta}_k$ to a discrete set and obtain a coarse estimate by applying singular value decomposition (SVD) to approximate $\boldsymbol{E_k}$ with the closest rank-1 matrix in the Frobenius norm \cite{k-svd};
	(2) {Cubic Newton refinement stage}: Following the coarse estimation, we iteratively refine both $\tilde{\theta}_k$ and $\boldsymbol{x}_T^k$ using the cubic regularization of Newton’s method \cite{cubic_nesterov_06}, which improves the accuracy of the parameter estimates.}

{To enforce the sparsity pattern in $\boldsymbol{x}_T^{k}$, we employ a scheme similar to that outlined in \cite{k-svd}. Let $\mathcal{I}_k$ denote the set of indices corresponding to the non-zero entries of $\boldsymbol{x}_T^{k}$,}
i.e., $\mathcal{I}_k=\{\ell\mid \boldsymbol{x}_T^k(\ell)\neq 0\}$.
Define $\boldsymbol{E_k^R} = (\boldsymbol{E_k})_{\mathcal{I}_k}$, where the subscript $\mathcal{I}_k$ indicates the restriction of the matrix to the columns {indexed by} $\mathcal{I}_k$.
Similarly, $\boldsymbol{x}_R^k = (\boldsymbol{x}_T^k)_{\mathcal{I}_k}$ is derived by removing the zero entries from $\boldsymbol{x}_T^k$.
{With this notation, minimizing \eqref{eq:f_norm} can be performed directly using SVD, without considering the parametric structure of the atoms.}
Applying SVD to the restricted matrix $\boldsymbol{E_k^R}$, we decompose it as $\boldsymbol{E_k^R} = \boldsymbol{U} \boldsymbol{\Sigma} \boldsymbol{V}^H$.
Let $\boldsymbol{u}_1$ represent the first column of $\boldsymbol{U}$.
Given the structure of the atom, the atom corresponding to the optimal $\tilde{\theta}_k$ should be close to $\boldsymbol{u}_1$.
Using this insight, we can derive a coarse estimate of $\tilde{\theta}_k$ in the coarse estimation stage.

\textit{Coarse estimation stage}: We obtain a coarse estimate of $\tilde{\theta}_k$ by restricting it to a finite discrete set denoted by ${\boldsymbol{\Omega}} = \{k(2\pi/{\gamma} N):k=0,1,\cdots,({\gamma} N-1)\}$, where {$\gamma$} represents the oversampling factor relative to the Discrete Fourier Transform (DFT) grid. The output $(\tilde{\theta}_k)_c$ is determined by solving the following optimization problem:
\begin{equation}
	(\tilde{\theta}_k)_c = \arg \max_{\theta \in {\boldsymbol{\Omega}}} |\boldsymbol{a}(\theta)^H\boldsymbol{u}_1|.
	\label{eq:omega_coarse}
\end{equation}
Once $\tilde{\theta}_k$ is fixed, the optimal coefficients that minimize $S(\tilde{\theta}_k, \boldsymbol{x}_T^k)$ are given by:
\begin{equation}
	\hat{\boldsymbol{x}}_R^k = \boldsymbol{a}(\tilde{\theta}_k)^H \boldsymbol{E_k^{R}},
	\label{eq:gain_coarse}
\end{equation}
where $\hat{\boldsymbol{x}}_R^k = (\hat{\boldsymbol{x}}_T^k)_{\mathcal{I}_k}$.
Consequently, the corresponding coefficients in index set $\mathcal{I}_k$ at this stage are represented as $\boldsymbol{a}((\tilde{\theta}_k)_c)^H \boldsymbol{E_k^R}$.

\textit{Cubic Newton refinement stage}: {To mitigate the effects of grid mismatch encountered during the coarse estimation stage, we employ a refinement procedure based on Cubic Newton method, allowing for continuous parameter estimation.}
Let $(\hat{\theta}_k, \hat{\boldsymbol{x}}_T^k)$ denote the current estimate. 
Note that the {partial Hessian} of our objective function {$S$ with respect to $\theta$} is {Lipschitz continuous}, as defined in Assumption \ref{assum:1}(ii).
Thus, we can try to {refine the frequency $\hat{\theta}_k$ to} {$\hat{\theta}_k^{\prime}$} using cubic regularization of Newton’s method proposed in \cite{cubic_nesterov_06}.
The refinement step for frequency is given by
\begin{equation}
	\hat{\theta}_k^{\prime} \in \arg \min_{\theta} \xi_{\hat{\theta}_k, \hat{\boldsymbol{x}}^k_T}^{L(\hat{\boldsymbol{x}}^k_T)}(\theta), 
	\label{eq:newton_refine}
\end{equation}
where the auxiliary function
$$
\begin{aligned}
	\xi_{\hat{\theta}_k, \hat{\boldsymbol{x}}^k_T}^{L(\hat{\boldsymbol{x}}^k_T)}(\theta) =&  \left\langle {{\nabla_{\theta}}S{\left(\hat{\theta}_k, \hat{\boldsymbol{x}}^k_T\right)}}, \theta-\theta_k\right\rangle+\frac{1}{2}\left\langle {{\nabla^2_{\theta\theta}}S\left(\hat{\theta}_k, \hat{\boldsymbol{x}}^k_T\right)}( \theta-\theta_k),  \theta-\theta_k\right\rangle \\
	&+\frac{{L(\hat{\boldsymbol{x}}^k_T)}}{6}|\theta-\theta_k|^3
\end{aligned}
$$
is an upper second-order approximation for our objective function.
{$L(\hat{\boldsymbol{x}}^k_T)$ is the {Lipschitz continuous moduli} of objective function given $\hat{\boldsymbol{x}}^k_T$.}
{The gain parameter is then updated according to \eqref{eq:gain_coarse}.}
In Section \ref{sec:convergence}, we will provide a detailed analysis of the aforementioned strategy and demonstrate the convergence guarantee of this scheme throught a BCD framework \cite{bcd_bolte_14}.

\begin{algorithm}[t] 
	\caption{{Cubic Newtonized K-SVD (Cubic NK-SVD)}} 		\label{alg:nk-svd} 
	\begin{algorithmic}[1]
		\STATE {\bfseries Input:}{ $\boldsymbol{Y}$, $R$, $\gamma$, $\epsilon$.} 
		\STATE {\bfseries Initialize:} $\widehat{K}=R$, $\boldsymbol{f}=\{k(2\pi/R):k=0,1,\cdots,R-1\}$ and  $\boldsymbol{\Omega} = \{k(2\pi/{\gamma} N):k=0,1,\cdots,({\gamma} N-1)\}$.
		\STATE {\bfseries Output:} {Updated $\widehat{K}$, $\boldsymbol{f}$, and $\boldsymbol{X}$}.
		\REPEAT 
		\STATE {\bfseries \textit{Sparse Coding Stage}}:
		\STATE $\boldsymbol{X}$ is updated using OMP {with an error bound $\epsilon$}.
		\STATE {\bfseries \textit{Atom Update Stage}}:
		\FOR{$k=$ 1 \textbf{to} $R$}
		\STATE Compute $\mathcal{I}_k=\{\ell\mid {\boldsymbol{x}}_T^k(\ell)\neq 0\}$.
		\IF{\rm $\mathcal{I}_k=\emptyset$}
		\STATE {Eliminate the unused atom $\boldsymbol{a}(\tilde{\theta}_j)$}.
		\STATE $\widehat{K} = \widehat{K} - 1$.
		\ELSE
		\STATE Compute error $\boldsymbol{E_k} = \boldsymbol{Y} -\sum_{j\neq k} \boldsymbol{a}(\tilde{\theta}_j) {\boldsymbol{x}}_T^j$.
		\STATE {Retrict $\boldsymbol{E_k}$ by $\boldsymbol{E_k^R} = (\boldsymbol{E_k})_{\mathcal{I}_k}$}.
		\STATE Compute first left singular vector $\boldsymbol{u}_1$ of {$\boldsymbol{E_k^R}$}.
		\STATE \textit{Coarse Estimation}: Update $(\tilde{\theta}_k)_c$ within $\boldsymbol{\Omega}$ by \eqref{eq:omega_coarse} and its corresponding gain $({\boldsymbol{x}}_T^k)_c$ by \eqref{eq:gain_coarse}.
		\STATE \textit{Cubic Newton Refinement}: Refine $(\hat{\theta}_k, \hat{\boldsymbol{x}}_T^k)$ using \eqref{eq:newton_refine}.
		\ENDIF
		\ENDFOR
		\UNTIL{stopping criterion met}
	\end{algorithmic}
\end{algorithm}

{A common occurrence in the algorithm is the non-utilization of certain atoms. Specifically, if the atom $\boldsymbol{a}(\tilde{\theta}_j)$ is unused, it implies that its corresponding support set $\mathcal{I}_j$ is empty. As a result, updating such an atom becomes unnecessary, as it cannot be modified. To address this, we implement a strategy of removing these unused atoms, thereby retaining only the relevant ones.}
This strategy not only yields a more efficient set of active atoms but also reduces the computational complexity of the algorithm, as the number of useful atoms is considerably smaller than the initial grid resolution $R$. 
The {cubic NK-SVD} algorithm\footnote{{It can be easily extended to compressive scenarios with a compressive manifold $\boldsymbol{\tilde{D}}(\boldsymbol{\tilde{\theta}})= \boldsymbol{\Phi} \boldsymbol{D}(\boldsymbol{\tilde{\theta}})$.}} is summarized in Algorithm \ref{alg:nk-svd}.

\begin{remk} 
	Stopping criterion: {Cubic NK-SVD} algorithm will be terminated if the difference in function values between two consecutive steps is less than {$10^{-3}$}, or a maximum of {30} iterations is reached.
	
\end{remk}

\section{{Convergence}} \label{sec:convergence}

We begin by addressing the estimation of a single frequency, subsequently extending the methodology to handle a mixture of frequencies. Our focus initially centers on solving the nonconvex minimization problem involving two blocks of variables, $(x,\boldsymbol{y}) \in \mathbb{R}^1\times \mathbb{C}^T$, which is formulated as:
$$
\min_{{x}, \boldsymbol{y}}  H({x}, \boldsymbol{y}).
$$
In line with the approach detailed in \cite{cubic_nesterov_06} and \cite{bcd_bolte_14}, we impose the following blanket assumption.
\begin{assum} \label{assum:1}
	(i) Assume that the objective function $H(x,\boldsymbol{y})$ is bounded below, i.e.,
	$\inf_{\mathbb{R}^1\times \mathbb{C}^T} H \geq H^*.$
	
	(ii) For any fixed $\boldsymbol{y}$, the function $x \rightarrow H(x,\boldsymbol{y})$ is $C_{L(\boldsymbol{y})}^{2,2}$, meaning that the {partial Hessian} $\nabla^2_{xx} H(x,\boldsymbol{y})$ is Lipschitz continuous with moduli $L(\boldsymbol{y})$. Specifically, we have:
	$$
	\left\|\nabla^2_{xx} H\left(x_1, \boldsymbol{y}\right)-\nabla^2_{xx} H\left(x_2, \boldsymbol{\boldsymbol{y}}\right)\right\| \leq L(\boldsymbol{y})\left\|x_1-x_2\right\|, \forall x_1, x_2 \in \mathbb{R}^1.
	$$
	
	(iii) For any fixed $x$ the function $\boldsymbol{y} \rightarrow H(x,\boldsymbol{y})$ is assumed to be {strongly convex (or more specifically, $\nu$-strongly convex)} in $\mathbb{C}^{T}$.
	In other words, there exists $\nu>0$ such that
	$$
	{(\nabla^2_{\boldsymbol{y}\boldsymbol{y}})_c H(x,\boldsymbol{y})} \succeq \frac{\nu}{2} {\boldsymbol{I}_{2T}} \succ {\boldsymbol{0}},
	$$
	where we use the notation ${(\nabla^2_{\boldsymbol{y}\boldsymbol{y}})_c H(\boldsymbol{y})}$ to refer to the complex form of the partial Hessian matrix of $H(x,\boldsymbol{y})$ \cite{adaptation}.
	
	(iv) There exists $\lambda^-, \lambda^+$ such that
	\begin{equation}
		\inf \left\{L\left({\boldsymbol{y}}^k\right): k \in \mathbb{N}\right\} \geq \lambda^{-},
		\label{eq:lip_lower_bound}
	\end{equation}
	\begin{equation}
		\sup \left\{L\left({\boldsymbol{y}}^k\right): k \in \mathbb{N}\right\} \leq \lambda^{+}.
		\label{eq:lip_upper_bound}
	\end{equation}
\end{assum}
A few words on Assumption \ref{assum:1} are now in order.
\begin{remk}
	(i) The partial Lipschitz properties required in Assumption \ref{assum:1}(ii) are at the heart of
	{the cubic NK-SVD algorithm} which is designed to fully exploit the Hessian-Lipschitz property of the problem at hand.
	
	(ii)
	In our problem, the goal is to estimate unknown frequencies and corresponding gains.
	To be specific, the objective function $H(x,\boldsymbol{y})$ is 
	\begin{equation}
		H(x,{\boldsymbol{y}}) = \|\boldsymbol{E} - \boldsymbol{a}(x){\boldsymbol{y}^H}\|_F^2,
		\label{eq:H_doa}
	\end{equation} 
	where $\boldsymbol{a}(x)$ is the steering vector of frequency $x\in \mathbb{R}^1$.
	It is easy to verify that $H(x,{\boldsymbol{y}})$ in \eqref{eq:H_doa} satisfies the above Assumption \ref{assum:1} (See Appendix \ref{appendix:property_H}).
\end{remk}

As outlined in Algorithm \ref{alg:nk-svd},
{the cubic NK-SVD algorithm} can be viewed as a Gauss-Seidel iteration scheme, popularly known as alternating minimization.
Starting with an initial point $(x^0,{\boldsymbol{y}}^0)$, we generate a sequence $\{(x^k,{\boldsymbol{y}}^k)\}_{k\in \mathbb{N}}$ via the scheme:
\begin{equation}
	x^{k+1} \in \arg \min_x \xi_{x^k, {\boldsymbol{y}}^k}^{{L({\boldsymbol{y}}^k)}}(x), \label{eq:x_subproblem}
\end{equation}
\begin{equation}
	{\boldsymbol{y}}^{k+1} \in \arg \min_{\boldsymbol{y}} H \left(x^{k+1}, \boldsymbol{y}\right), \label{eq:y_subproblem}
\end{equation}
where the auxiliary function is defined as:
$$
\begin{aligned}
	\xi_{x^k, {\boldsymbol{y}}^k}^{{L(\boldsymbol{y}^k)}}(x) =&  \left\langle \nabla_{x}H\left(x^k, {\boldsymbol{y}}^k\right), x-x^{k}\right\rangle+\frac{1}{2}\left\langle \nabla^2_{xx}H\left(x^k, {\boldsymbol{y}}^k\right)(x-x^{k}), x-x^{k}\right\rangle \\
	&+\frac{{L(\boldsymbol{y}^k)}}{6}|x-x^{k}|^3.
\end{aligned}
$$
This function serves as an upper second-order approximation for our objective function under Assumption \ref{assum:1}(ii):
$$
H(x,{\boldsymbol{y}}^k) \leq \xi_{x^k, {\boldsymbol{y}}^k}^{{L(\boldsymbol{y}^k)}}(x), \quad \forall x \in \mathbb{R}^1.
$$
This is exactly the approach we analyze in this paper. This scheme is called cubic regularization of Newton’s method \cite{cubic_nesterov_06}.
Moreover, solving the subproblem \eqref{eq:x_subproblem} is not challenging, as it is a single-variable optimization problem. This problem can be reformulated as solving the following nonlinear equation:
$$
\nabla_{x}H\left(x^k, {\boldsymbol{y}}^k\right)+\nabla^2_{xx}H\left(x^k, {\boldsymbol{y}}^k\right)(x-x^k)+\frac{1}{2} {L(\boldsymbol{y}^k)}|x-x^k| \cdot(x-x^k)=0.
$$
Due to the strong convexity assumption of $h$ with respect to $\boldsymbol{y}$, 
the minimum of \eqref{eq:y_subproblem} in each step is uniquely attained:
$$
\boldsymbol{y}^{k+1} = \boldsymbol{E}^H\boldsymbol{a}(x^{k+1}).
$$
Convergence results for the Gauss-Seidel method, also known as the BCD approach, can be found in several studies \cite{tseng2001convergence,bcd_bolte_14}.
In the light of general methodology in \cite{bcd_bolte_14}, we characterize the convergence results {for the cubic NK-SVD algorithm}.

We first establish some useful properties for {the cubic NK-SVD algorithm} under our Assumption \ref{assum:1}.
For convenience, we will use the notation
$$
\boldsymbol{z}^k := (x^k,\boldsymbol{y}^k), \quad \forall k \geq 0.
$$

\begin{lem} \label{lem:convergence}
	Suppose that Assumption \ref{assum:1} hold. Let $\{\boldsymbol{z}^k\}_{k\in\mathbb{N}}$ be a sequence iteratively generated by \eqref{eq:x_subproblem} and \eqref{eq:y_subproblem}. The following assertions hold.
	
	(i) The sequence $\{H(\boldsymbol{z}^k)\}_{k\in\mathbb{N}}$ is nonincreasing, and in particular, for any $k\geq 0$, we have
	$$
	\frac{\lambda^-}{12}\left|x^{k+1}-x^k\right|^3 + \left\|\boldsymbol{y}^{k+1}-\boldsymbol{y}^k\right\|^2 \leq H\left(\boldsymbol{z}^k\right)-H\left(\boldsymbol{z}^{k+1}\right).
	$$
	
	(ii) We have
	\begin{equation}
		\sum_{k=0}^{\infty}\left|x^{k+1}-x^k\right|^{{3}}+\left\|\boldsymbol{y}^{k+1}-\boldsymbol{y}^k\right\|_3^{{3}}=\sum_{k=0}^{\infty}\left\|\boldsymbol{z}^{k+1}-\boldsymbol{z}^k\right\|_3^{{3}}<\infty,
		\label{eq:lem_point_conver}
	\end{equation}
	and hence $\lim_{k \rightarrow \infty}\left\|\boldsymbol{z}^{k+1}-\boldsymbol{z}^k\right\|=0$.
\end{lem}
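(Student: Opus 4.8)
The plan is to establish part (i) first by exploiting the defining inequalities of the two subproblems \eqref{eq:x_subproblem} and \eqref{eq:y_subproblem}, then to deduce part (ii) as an almost immediate consequence of the telescoping sum. For the $x$-update, the key is the standard estimate attached to cubic regularization of Newton's method from \cite{cubic_nesterov_06}: since $x^{k+1}$ minimizes the model $\xi_{x^k,\boldsymbol{y}^k}^{L(\boldsymbol{y}^k)}$ and this model is itself a global upper bound for $x\mapsto H(x,\boldsymbol{y}^k)$ by Assumption \ref{assum:1}(ii), comparing the model value at $x^{k+1}$ against its value at $x^k$ (where it equals $H(x^k,\boldsymbol{y}^k)$) yields a descent of at least order $|x^{k+1}-x^k|^3$ times a constant proportional to $L(\boldsymbol{y}^k)$. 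Concretely, the cubic-regularization analysis gives a bound of the form $H(x^{k+1},\boldsymbol{y}^k)\le H(x^k,\boldsymbol{y}^k)-\tfrac{L(\boldsymbol{y}^k)}{12}|x^{k+1}-x^k|^3$; invoking the uniform lower bound \eqref{eq:lip_lower_bound} replaces $L(\boldsymbol{y}^k)$ by $\lambda^-$. For the $\boldsymbol{y}$-update, I would use Assumption \ref{assum:1}(iii): since $\boldsymbol{y}^{k+1}$ is the exact minimizer of the $\nu$-strongly convex function $\boldsymbol{y}\mapsto H(x^{k+1},\boldsymbol{y})$, the standard strong-convexity inequality at the minimizer gives $H(x^{k+1},\boldsymbol{y}^k)-H(x^{k+1},\boldsymbol{y}^{k+1})\ge \tfrac{\nu}{4}\|\boldsymbol{y}^{k+1}-\boldsymbol{y}^k\|^2$ (the factor coming from the $\tfrac{\nu}{2}\boldsymbol{I}_{2T}$ normalization in the hypothesis). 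Absorbing $\nu/4$ into the constant in front of $\|\boldsymbol{y}^{k+1}-\boldsymbol{y}^k\|^2$ — or, as the statement is written, simply keeping the coefficient $1$, which is legitimate as long as $\nu\ge 4$ or after rescaling the objective — and adding the two inequalities yields
$$
\frac{\lambda^-}{12}\left|x^{k+1}-x^k\right|^3+\left\|\boldsymbol{y}^{k+1}-\boldsymbol{y}^k\right\|^2\le H(\boldsymbol{z}^k)-H(\boldsymbol{z}^{k+1}),
$$
and since the right-hand side is nonnegative, monotonicity of $\{H(\boldsymbol{z}^k)\}$ follows.

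For part (ii), I would sum the inequality from (i) over $k=0,\ldots,N-1$; the right-hand side telescopes to $H(\boldsymbol{z}^0)-H(\boldsymbol{z}^N)$, which is bounded above by $H(\boldsymbol{z}^0)-H^*$ using the lower-boundedness Assumption \ref{assum:1}(i). Letting $N\to\infty$ gives $\sum_{k}|x^{k+1}-x^k|^3<\infty$ and $\sum_k\|\boldsymbol{y}^{k+1}-\boldsymbol{y}^k\|^2<\infty$. The statement of (ii) is phrased with $\|\boldsymbol{y}^{k+1}-\boldsymbol{y}^k\|_3^3$; to reach that form I would note that the summability of $\|\boldsymbol{y}^{k+1}-\boldsymbol{y}^k\|^2$ forces $\|\boldsymbol{y}^{k+1}-\boldsymbol{y}^k\|\to 0$, so for $k$ large the terms are bounded by $1$ and hence $\|\boldsymbol{y}^{k+1}-\boldsymbol{y}^k\|_3^3\le\|\boldsymbol{y}^{k+1}-\boldsymbol{y}^k\|^3\le\|\boldsymbol{y}^{k+1}-\boldsymbol{y}^k\|^2$ (using the equivalence $\|\cdot\|_3\le\|\cdot\|_2$ on $\mathbb{C}^T$), which gives summability of the cubed $3$-norm as well. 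Combining the two summable series and using $\|\boldsymbol{z}^{k+1}-\boldsymbol{z}^k\|_3^3=|x^{k+1}-x^k|^3+\|\boldsymbol{y}^{k+1}-\boldsymbol{y}^k\|_3^3$ establishes \eqref{eq:lem_point_conver}. The convergence of the series forces its general term to zero, i.e. $\|\boldsymbol{z}^{k+1}-\boldsymbol{z}^k\|_3\to 0$, and equivalence of norms on the finite-dimensional space gives $\|\boldsymbol{z}^{k+1}-\boldsymbol{z}^k\|\to 0$.

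The main obstacle, and the step deserving the most care, is the descent estimate for the cubic-Newton $x$-step: one must verify that the $\tfrac{1}{12}L(\boldsymbol{y}^k)|x^{k+1}-x^k|^3$ lower bound genuinely holds in our setting. This requires checking that $\xi_{x^k,\boldsymbol{y}^k}^{L(\boldsymbol{y}^k)}(x)\ge H(x,\boldsymbol{y}^k)$ for all $x$ — which is precisely the consequence of the Hessian-Lipschitz property in Assumption \ref{assum:1}(ii), integrated twice — and then combining it with the first-order optimality condition of the cubic model at its minimizer $x^{k+1}$. A minor technical point to watch is that $H$ takes complex vector arguments in the second block, so the strong-convexity statement in (iii) is formulated via the real $2T$-dimensional realification and the complex Hessian notation from \cite{adaptation}; I would make the real/complex bookkeeping explicit once at the start so that the strong-convexity descent inequality can be applied without ambiguity. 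Everything else is routine telescoping and norm-equivalence manipulation.
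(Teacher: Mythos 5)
Your proposal is correct and follows essentially the same route as the paper: the cubic-regularization descent estimate from \cite{cubic_nesterov_06} for the $x$-block, the strong-convexity inequality at the exact minimizer for the $\boldsymbol{y}$-block, adding and telescoping against the lower bound in Assumption \ref{assum:1}(i), and the observation that $\left\|\boldsymbol{y}^{k+1}-\boldsymbol{y}^k\right\|_3^3\leq\left\|\boldsymbol{y}^{k+1}-\boldsymbol{y}^k\right\|^2$ once the increments drop below $1$. Your caveat about the coefficient of $\left\|\boldsymbol{y}^{k+1}-\boldsymbol{y}^k\right\|^2$ is apt but not a gap: the paper's own proof likewise only produces the constant $\nu/2$ there and identifies it with $1$ without comment, which is harmless for the specific objective \eqref{eq:H_doa} where $\nu=2$.
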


\begin{proof}
	See Appendix \ref{appendix:proof_lem_convergence}.
\end{proof}

{To demonstrate the effectiveness of our approach, we now illustrate how iterative schemes in \eqref{eq:x_subproblem} and \eqref{eq:y_subproblem}, which are integral components of {the cubic NK-SVD algorithm}, can be applied to efficiently solve the LSE problem, with the objective function $H(x,\boldsymbol{y})$ defined in \eqref{eq:H_doa}.}

\begin{lem} \label{lem:gradient_gap}
	Suppose that Assumptions \ref{assum:1} hold.
	Let $\{\boldsymbol{z}^k\}_{k\in\mathbb{N}}$ be a sequence generated by \eqref{eq:x_subproblem} and \eqref{eq:y_subproblem}
	{which is assumed to be bounded, }{i.e., there exists {$\sigma$} such that $\|\boldsymbol{z}^k\| \leq \sigma,\forall k \in \mathbb{N}$}.
	For each positive integer $k$, {we have}
	\begin{equation}
		\|\nabla_x H(x^{k+1},\boldsymbol{y}^{k+1}) \| \leq \lambda^+ \|x^{k+1} - x^k \|^2 + 2{\sigma} \|\boldsymbol{y}^{k+1} - \boldsymbol{y}^k\|,
		\label{eq:lem_grad_x}
	\end{equation}
	and {Wirtinger derivative}
	$$
	\|\nabla_{\boldsymbol{y}} H(x^{k+1},\boldsymbol{y}^{k+1}) \| = 0.
	$$
	Then, 
	$$
	\|\nabla_{\boldsymbol{z}} H(\boldsymbol{z}^{k+1}) \| \leq \|\nabla_x H(x^{k+1},\boldsymbol{y}^{k+1}) \| + \|\nabla_{\boldsymbol{y}} H(x^{k+1},\boldsymbol{y}^{k+1}) \| \leq {\rho_2} \|\boldsymbol{z}^{k+1} - \boldsymbol{z}^k\|,
	$$
	where $\rho_2 = \max \{\lambda^+, 2\sigma\}$.
\end{lem}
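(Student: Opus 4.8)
The plan is to establish the three claimed inequalities in turn, starting with the bound \eqref{eq:lem_grad_x} on $\|\nabla_x H(x^{k+1},\boldsymbol{y}^{k+1})\|$, which is the substantive part; the Wirtinger-derivative identity is immediate from the optimality of the $\boldsymbol{y}$-update, and the final chained estimate is then a routine combination of the two.

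For the gradient-in-$\boldsymbol{y}$ claim, I would simply invoke \eqref{eq:y_subproblem}: since $\boldsymbol{y}^{k+1}$ is the (unique, by Assumption~\ref{assum:1}(iii)) minimizer of the smooth convex map $\boldsymbol{y}\mapsto H(x^{k+1},\boldsymbol{y})$, first-order optimality gives $\nabla_{\boldsymbol{y}} H(x^{k+1},\boldsymbol{y}^{k+1})=0$, hence norm zero. This uses nothing beyond differentiability and convexity in the second block, and the explicit closed form $\boldsymbol{y}^{k+1}=\boldsymbol{E}^H\boldsymbol{a}(x^{k+1})$ already recorded in the text confirms it.

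The core step is \eqref{eq:lem_grad_x}. First I would write down the first-order optimality condition for the cubic subproblem \eqref{eq:x_subproblem}, namely
$$
\nabla_x H(x^k,\boldsymbol{y}^k) + \nabla^2_{xx} H(x^k,\boldsymbol{y}^k)(x^{k+1}-x^k) + \tfrac{1}{2} L(\boldsymbol{y}^k)\,|x^{k+1}-x^k|\,(x^{k+1}-x^k) = 0,
$$
which lets me express $\nabla_x H(x^k,\boldsymbol{y}^k)$ in terms of $x^{k+1}-x^k$. Then I would pass from $\nabla_x H(x^k,\boldsymbol{y}^k)$ to $\nabla_x H(x^{k+1},\boldsymbol{y}^{k+1})$ by a two-step telescoping: the change in the $x$-argument is controlled by the Hessian-Lipschitz bound of Assumption~\ref{assum:1}(ii) together with a Taylor expansion (the standard cubic-regularization estimate $\|\nabla_x H(x^{k+1},\boldsymbol{y}^k) - \nabla_x H(x^k,\boldsymbol{y}^k) - \nabla^2_{xx}H(x^k,\boldsymbol{y}^k)(x^{k+1}-x^k)\| \le \tfrac{L(\boldsymbol{y}^k)}{2}|x^{k+1}-x^k|^2$), which after combining with the optimality condition yields $\|\nabla_x H(x^{k+1},\boldsymbol{y}^k)\| \le L(\boldsymbol{y}^k)|x^{k+1}-x^k|^2 \le \lambda^+|x^{k+1}-x^k|^2$ using \eqref{eq:lip_upper_bound}. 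The change in the $\boldsymbol{y}$-argument, from $\boldsymbol{y}^k$ to $\boldsymbol{y}^{k+1}$, I would control by a Lipschitz bound on $\boldsymbol{y}\mapsto\nabla_x H(x^{k+1},\boldsymbol{y})$; for the concrete $H$ in \eqref{eq:H_doa} one computes $\nabla_x H$ explicitly and checks that its $\boldsymbol{y}$-variation is bounded by $2\sigma\|\boldsymbol{y}^{k+1}-\boldsymbol{y}^k\|$ on the ball $\|\boldsymbol{z}^k\|\le\sigma$ (this is where boundedness of the sequence and the factor $2\sigma$ enter, essentially because $\|\boldsymbol{a}(x)\|=1$ and the cross term is bilinear in $\boldsymbol{a}(x)$ and $\boldsymbol{y}$). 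Adding the two contributions gives \eqref{eq:lem_grad_x}.

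Finally, for the concluding inequality I would use the elementary bound $\|\nabla_{\boldsymbol{z}} H\| \le \|\nabla_x H\| + \|\nabla_{\boldsymbol{y}} H\|$ (valid up to the equivalence of norms on the product space, with constants absorbed), substitute the zero from the $\boldsymbol{y}$-part and \eqref{eq:lem_grad_x} for the $x$-part, and then dominate $|x^{k+1}-x^k|^2$ and $\|\boldsymbol{y}^{k+1}-\boldsymbol{y}^k\|$ each by a constant multiple of $\|\boldsymbol{z}^{k+1}-\boldsymbol{z}^k\|$. Here I would note that Lemma~\ref{lem:convergence}(ii) forces $\|\boldsymbol{z}^{k+1}-\boldsymbol{z}^k\|\to 0$, so for $k$ large $|x^{k+1}-x^k|^2 \le |x^{k+1}-x^k| \le \|\boldsymbol{z}^{k+1}-\boldsymbol{z}^k\|$, which converts the quadratic term into a linear one and produces the linear comparison $\|\nabla_{\boldsymbol{z}} H(\boldsymbol{z}^{k+1})\| \le \rho_2\|\boldsymbol{z}^{k+1}-\boldsymbol{z}^k\|$ with $\rho_2=\max\{\lambda^+,2\sigma\}$ (possibly after enlarging $\rho_2$ by an absolute constant to cover the small-$k$ terms and the norm equivalence). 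The main obstacle I anticipate is the passage from the cubic optimality condition to the gradient bound at the \emph{new} point $(x^{k+1},\boldsymbol{y}^{k+1})$ rather than the old one: one must carefully separate the $x$-displacement error (handled by Hessian-Lipschitzness) from the $\boldsymbol{y}$-displacement error (handled by boundedness and the explicit structure of $H$), and make sure the quadratic term $|x^{k+1}-x^k|^2$ is legitimately absorbed into a linear estimate — which is exactly why the boundedness hypothesis $\|\boldsymbol{z}^k\|\le\sigma$ and Lemma~\ref{lem:convergence} are invoked.
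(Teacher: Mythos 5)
Your proposal is correct and follows essentially the same route as the paper's proof: the $\boldsymbol{y}$-gradient vanishes by optimality of \eqref{eq:y_subproblem}, the bound $\|\nabla_x H(x^{k+1},\boldsymbol{y}^k)\|\le L(\boldsymbol{y}^k)|x^{k+1}-x^k|^2$ comes from the cubic-step estimate (you re-derive the cited Lemma~\ref{lem:gradient_x} from the optimality condition plus the Hessian-Lipschitz Taylor bound, which the paper simply quotes), the $\boldsymbol{y}$-displacement of $\nabla_x H$ is controlled via the explicit bilinear structure of \eqref{eq:H_doa} and the boundedness hypothesis, and the quadratic term is absorbed into a linear one using $|x^{k+1}-x^k|\le 1$ for large $k$ from Lemma~\ref{lem:convergence}(ii). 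Your remark about possibly enlarging $\rho_2$ for the norm equivalence on the product space is a fair point the paper glosses over, but it does not change the substance.
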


\begin{proof}
	See Appendix \ref{appendix:proof_gradient_gap}.
\end{proof}

We now summarize {several properties} of the limit point set for sequences generated by the cubic NK-SVD algorithm.
Let $\{\boldsymbol{z}^k\}_{k\in\mathbb{N}}$ denote the sequence produced by the algorithm from a initial point $\boldsymbol{z}^0$.
The set of all limit points is denoted by $\omega (\boldsymbol{z}^0)$, defined as follows:
$$
\begin{aligned}
	\omega (\boldsymbol{z}^0) = \{\bar{\boldsymbol{z}}\in \mathbb{R}^1\times \mathbb{C}^T: \exists\ \text{an increasing sequence of integers} \\ \{k_l\}_{l\in\mathbb{N}},\ \text{sucn that}\ \boldsymbol{z}^{k_l} \rightarrow\bar{\boldsymbol{z}}\ as\ l \rightarrow \infty \}.
\end{aligned}
$$

\begin{thm} \label{thm:critical}
	Suppose that Assumption \ref{assum:1} hold. Let $\{\boldsymbol{z}^k\}_{k\in\mathbb{N}}$ be a sequence generated from {the cubic NK-SVD algorithm} which is assumed to be bounded.
	The following assertions hold.
	
	(i) $\emptyset\neq \omega(\boldsymbol{z}^0)\in crit\,H := \{\boldsymbol{z}: \nabla_{\boldsymbol{z}}H(\boldsymbol{z}) = 0\}$.
	
	(ii) We have
	$$
	\lim_{k\rightarrow\infty}{dist}(\boldsymbol{z}^k,\omega(\boldsymbol{z}^0)) = 0,
	$$
	where ${dist}(\boldsymbol{z}^k,\omega(\boldsymbol{z}^0))$ denotes the distance from $\boldsymbol{z}^k$ to $\omega(\boldsymbol{z}^0)$.
\end{thm}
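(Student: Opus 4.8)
The plan is to follow the now-standard template for convergence of descent-type methods (as in \cite{bcd_bolte_14,tseng2001convergence}), using the two preliminary lemmas already at our disposal: Lemma \ref{lem:convergence} (sufficient decrease of $H$ along the iterates, plus summability of the successive differences, hence $\|\boldsymbol{z}^{k+1}-\boldsymbol{z}^k\|\to 0$) and Lemma \ref{lem:gradient_gap} (a relative error bound $\|\nabla_{\boldsymbol{z}}H(\boldsymbol{z}^{k+1})\|\le \rho_2\|\boldsymbol{z}^{k+1}-\boldsymbol{z}^k\|$). Together with the boundedness hypothesis on $\{\boldsymbol{z}^k\}$, these are exactly the ingredients needed. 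For part (ii) I would argue purely from $\|\boldsymbol{z}^{k+1}-\boldsymbol{z}^k\|\to 0$ and boundedness: if $\operatorname{dist}(\boldsymbol{z}^k,\omega(\boldsymbol{z}^0))$ did not tend to zero, we could extract a subsequence staying a fixed distance $\delta>0$ away from $\omega(\boldsymbol{z}^0)$; by boundedness this subsequence itself has a convergent sub-subsequence whose limit, by definition, lies in $\omega(\boldsymbol{z}^0)$, contradicting the $\delta$-separation. This is a short compactness argument and requires nothing beyond Lemma \ref{lem:convergence}(ii).

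For part (i), nonemptiness of $\omega(\boldsymbol{z}^0)$ is immediate from boundedness via Bolzano--Weierstrass in $\mathbb{R}^1\times\mathbb{C}^T\cong\mathbb{R}^{2T+1}$. The substantive claim is $\omega(\boldsymbol{z}^0)\subseteq\operatorname{crit}H$. Fix $\bar{\boldsymbol{z}}\in\omega(\boldsymbol{z}^0)$ and an increasing sequence $\{k_l\}$ with $\boldsymbol{z}^{k_l}\to\bar{\boldsymbol{z}}$. Because the successive differences vanish (Lemma \ref{lem:convergence}(ii)), we also have $\boldsymbol{z}^{k_l-1}\to\bar{\boldsymbol{z}}$; consequently, applying Lemma \ref{lem:gradient_gap} along the index $k_l-1$ gives $\|\nabla_{\boldsymbol{z}}H(\boldsymbol{z}^{k_l})\|\le\rho_2\|\boldsymbol{z}^{k_l}-\boldsymbol{z}^{k_l-1}\|\to 0$. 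Since $H$ (being a smooth function of the form \eqref{eq:H_doa}) has a continuous gradient, passing to the limit yields $\nabla_{\boldsymbol{z}}H(\bar{\boldsymbol{z}})=\lim_l\nabla_{\boldsymbol{z}}H(\boldsymbol{z}^{k_l})=0$, i.e. $\bar{\boldsymbol{z}}\in\operatorname{crit}H$. Here one should be slightly careful about what ``$\nabla_{\boldsymbol{z}}$'' means for the complex block $\boldsymbol{y}$: the paper has adopted the Wirtinger/real-stacked convention in Assumption \ref{assum:1}(iii) and Lemma \ref{lem:gradient_gap}, so ``$\operatorname{crit}H$'' should be read in that same convention, and continuity of the gradient in that convention is exactly continuity of the ordinary real gradient of $H$ viewed on $\mathbb{R}^{2T+1}$.

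I expect the only genuine obstacle to be a matter of bookkeeping rather than of mathematical depth: making sure the relative-error inequality from Lemma \ref{lem:gradient_gap} is invoked at the correct index (at $k_l-1$, not $k_l$), and that the shift $\boldsymbol{z}^{k_l-1}\to\bar{\boldsymbol{z}}$ is justified — this uses $\|\boldsymbol{z}^{k_l}-\boldsymbol{z}^{k_l-1}\|\le\sum_{j\ge k_l-1}\|\boldsymbol{z}^{j+1}-\boldsymbol{z}^{j}\|\to 0$ (tail of a convergent series), so it is a consequence of Lemma \ref{lem:convergence}(ii) and does not need any new estimate. One should also note that Lemma \ref{lem:gradient_gap} was stated under the standing boundedness hypothesis (the constant $\sigma$, hence $\rho_2$, depends on the bound $\sigma$), which is consistent with the boundedness assumed in Theorem \ref{thm:critical}. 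Beyond these small points, no Kurdyka--\L ojasiewicz property or further regularity is needed for the statements as given — KL would only be required for the stronger conclusion of full sequential convergence of $\{\boldsymbol{z}^k\}$, which is not claimed here.
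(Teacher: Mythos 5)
Your proposal is correct and follows essentially the same route as the paper: combine Lemma \ref{lem:convergence}(ii) (vanishing successive differences) with Lemma \ref{lem:gradient_gap} (the relative error bound) to conclude $\nabla_{\boldsymbol{z}}H(\boldsymbol{z}^k)\to 0$, pass to the limit along a convergent subsequence for (i), and use an elementary compactness argument for (ii). If anything, your write-up is more careful than the paper's (which invokes continuity of $H$ where continuity of $\nabla H$ is what is actually needed, and glosses over the index shift you handle explicitly), but there is no difference in approach.
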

\begin{proof}
	(i) Let $\boldsymbol{z}^*=(x^*,\boldsymbol{y}^*)$ be the limit point of $\{\boldsymbol{z}^k\}_{k\in\mathbb{N}}=(x^k,\boldsymbol{y}^k)_{k\in\mathbb{N}}$.
	This means that there is a subsequence $(x^{k_q},\boldsymbol{y}^{k_q})_{q\in\mathbb{N}}$ such that $(x^{k_q},\boldsymbol{y}^{k_q})\rightarrow (x^*,\boldsymbol{y}^*)$ as $q\rightarrow\infty$.
	{Since $H$ is continuous (See Assumption \ref{assum:1}(ii), (iii))}, 
	we obtian that
	$$
	\lim_{q\rightarrow\infty} H(x^{k_q},\boldsymbol{y}^{k_q}) = H(x^*,\boldsymbol{y}^*).
	$$
	On the other hand we know from Lemma \ref{lem:convergence}(ii) and \ref{lem:gradient_gap} that $\nabla H(\boldsymbol{z}) \rightarrow \boldsymbol{0}$ as $k\rightarrow \infty$.
	This proves that $(x^*,\boldsymbol{y}^*)$ is a critical point of $H$.
	
	(ii) This item follows as an elementary consequence of the definition of limit points.
\end{proof}

\begin{remk}
	(i) {The simple structure of {the cubic NK-SVD algorithm} facilitates its extension to more general cases involving $R$ frequencies (i.e., {$2R$} blocks). In this scenario, {Theorem \ref{thm:critical}} remains applicable. The detailed derivations are omitted, as they closely parallel the developments presented for the single-frequency case.}
	
	(ii) We observe that the sparse coding stage can accelerate algorithm convergence. 
	By selecting useful atoms, the number of variables requiring iteration is reduced. 
	This reduction is beneficial because, in practical scenarios, the number of frequencies to be recovered is significantly smaller than the initial grid size $R$. 
	However, further theoretical analysis is needed and remains a focus for our future research.
	
	(iii) Our convergence results are not limited to the analysis of the proposed cubic NK-SVD algorithm but can also be applied to general alternating minimization algorithms with higher-order regularization \cite{Quartic,Cubic-quartic}. However, this is beyond the scope of this paper, and we consider further analysis as future work.
\end{remk}

\section{Simulation Results} \label{sec:simulation}

{In this section, we evaluate} the performance of the proposed {cubic NK-SVD} algorithm\footnote{Available at \url{https://github.com/xzliu-opt/Cubic-NK-SVD}} {in DOA estimation}.
{The experiments involve mixtures of $K$ sinusoids of length $N = 64$, with randomly generated frequencies $\{\theta_k,k=1,\cdots,K\}$.}
The true coefficients in $\boldsymbol{s}(t)$ are generated i.i.d. with uniform random phase on $[0,2\pi)$ and amplitudes drawn from a normal density of {mean 10 and variance 3}.
We use this specification to ensure that all components can be distinguished from noise.
The observation quality is measured by the peak-signal-to-noise ratio (PSNR) which is defined as $\text{PSNR} = 10\log_{10}(1/\sigma^2)$, where $\sigma^2$ denotes the noise variance.
We perform {300} Monte Carlo trials for each scenario.

We introduce {three} metrics to assess the recovery performance of the algorithms: the reconstruction signal-to-noise ratio (RSNR) \cite{sure-ir}, $\beta(\boldsymbol{\theta},\boldsymbol{\hat{\theta}})$ \cite{sbl-de}, and success rate. The RSNR is defined as:
$$
\operatorname{RSNR}=20 \log _{10}\left(\frac{\left\|\boldsymbol{A}(\boldsymbol{\theta}) \boldsymbol{s}(t)\right\|_2}{\left\|\boldsymbol{A}(\boldsymbol{\theta}) \boldsymbol{s}(t) - \boldsymbol{A}(\boldsymbol{\hat{\theta}}) \boldsymbol{\hat{s}}(t) \right\|_2}\right),
$$
which quantifies the accuracy of the reconstructed signal.
The second metric, $\beta(\boldsymbol{\theta}, \boldsymbol{\hat{\theta}})$, evaluates frequency reconstruction and is given by:
$$
\beta(\boldsymbol{\theta},\boldsymbol{\hat{\theta}}) = \frac{1}{K} \sum_{k=1}^{K} (\min_{\hat{\theta}\in\boldsymbol{{\hat{\theta}}}} d(\hat{\theta},\theta_k))^2,
$$
where $d(\hat{\theta},\theta_k)$ denotes the distance between the true and estimated frequencies.
Finally, the success rate represents the proportion of Monte Carlo trials in which the frequency vector $\boldsymbol{\theta}$ is successfully recovered.
A trial is deemed successful if the model order $K$ is correctly estimated and $\beta(\boldsymbol{\theta},\boldsymbol{\hat{\theta}}) < {10^{-3}}$.

\begin{figure}[t]
	\centering
	\includegraphics[width=2.3in]{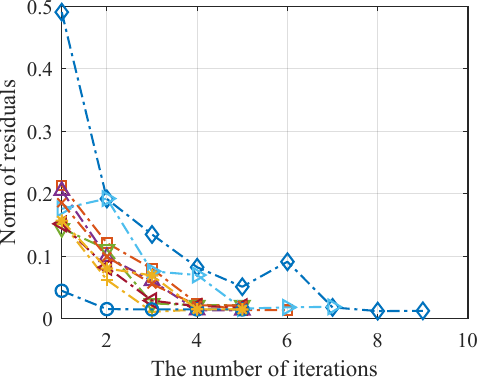}
	\caption{Norm of residuals versus the number of iterations in 10 Monte Carlo trials.}
	\label{fig:residual-iterpsnr20dbm32k3}
\end{figure}

We compare our proposed algorithm with other existing state-of-the-art ({SOTA}) super-resolution methods, namely, subspace method: MUSIC \cite{music};
On-Grid method: OMP \cite{omp}, $\ell_1$-SVD \cite{l1-svd};
Off-Grid methods: 
NOMP\footnote{Available at \url{https://bitbucket.org/wcslspectralestimation/continuous-frequency-estimation/src/NOMP/}} \cite{nomp},
MNOMP\footnote{Available at \url{https://github.com/RiverZhu/MNOMP-code}} \cite{mnomp};
Gridless methods: ANM\footnote{Available at \url{https://github.com/badrinarayan/astlinespec}} \cite{anm}, ANM-MMV\footnote{Available at \url{https://users.ece.cmu.edu/~yuejiec/papers/atomic_mmv.zip}} \cite{anm-mmv}, EMaC\footnote{Available at \url{https://users.ece.cmu.edu/~yuejiec/papers/EMaC_codes.zip}} \cite{emac}.
The $\text{Cram\'er}$-Rao bound (CRB) \cite{crb-stoica} serves as a benchmark, establishing the lower bound on the variance of any unbiased estimator.

For {the cubic NK-SVD algorithm}, we initialize the grid resolution with $R=N$, set the oversampling factor to $\gamma=10$, and define the error bound as $\epsilon=\sqrt{M} \sigma$.
The MUSIC algorithm \cite{music} estimates the autocorrelation matrix of the observed data vector, and in the MMV scenario with complete measurements, the total $T$ snapshots are used to generate this matrix.
Both  MUSIC and $\ell_1$-SVD require predefined initial grid points, which are set to 360 in our experiments to ensure recovery accuracy.
The true model order $K$ is supplied to these methods to pick out the top $K$ peaks using the MATLAB routine findpeaks.
For the OMP solution, a grid of size $2N=128$ is used, and it was observed that using a finer grid does not yield performance improvements.
The true model order $K$ is also used as the stopping criterion for the OMP algorithm.
For the NOMP and MNOMP algorithms, the oversampling factor $\gamma$ is set to 4 and the number of Newton steps $R_s$ is set to 1, with the number of cyclic refinements $R_c$ set to 3, as recommended in \cite{nomp,mnomp}.
The ANM, ANM-MMV and EMaC are solved using SDP.
The solutions to these gridless methods directly provide an estimate of the signal vector $\boldsymbol{A}(\boldsymbol{\theta})\boldsymbol{s}$ and the frequencies are recovered using the {root-MUSIC algorithm \cite{rootmusic}}, assuming the true model order $K$ is given.
The coefficients are then estimated via least-squares.

\begin{figure}[t]
	\centering
	\subfloat[]{\includegraphics[width=2.1in]{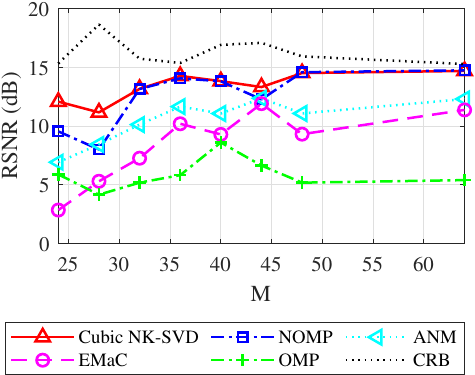}%
		\label{fig:rsnr_m_k=7_psnr=10}}
	\hfil
	\subfloat[]{\includegraphics[width=2.1in]{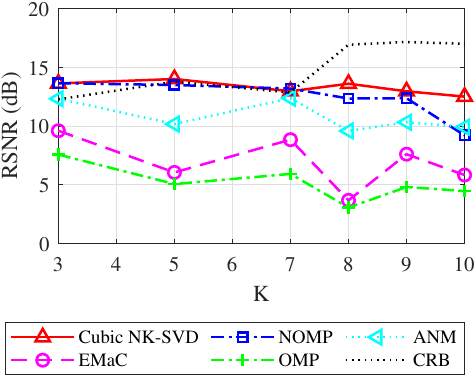}%
		\label{fig:rsnr_k_m=32_psnr=10}}
	\caption{{RSNRs} of respective algorithms. {(a) RSNRs vs. $M$, $K=7$ and PSNR=10 dB. (b) RSNRs vs. $K$, $M=32$ and PSNR=10 dB.}}
	\label{fig:rsnr_m_and_k_psnr=10}
\end{figure}

\begin{figure}[t]
	\centering
	\includegraphics[width=2.3in]{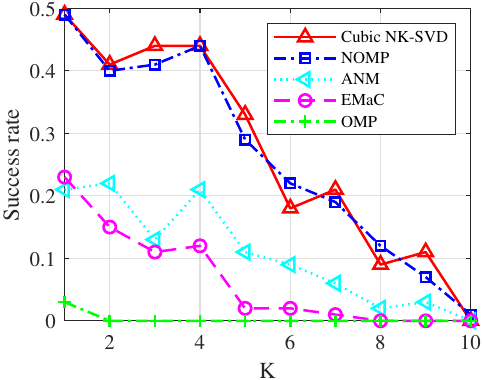}
	\caption{Success rates of respective algorithms vs. $K$, $M=32$ and PSNR=10 dB.}
	\label{fig:sr_k_m=32_psnr=10}
\end{figure}

\begin{figure}[t]
	\centering
	\subfloat[]{\includegraphics[width=1.7in]{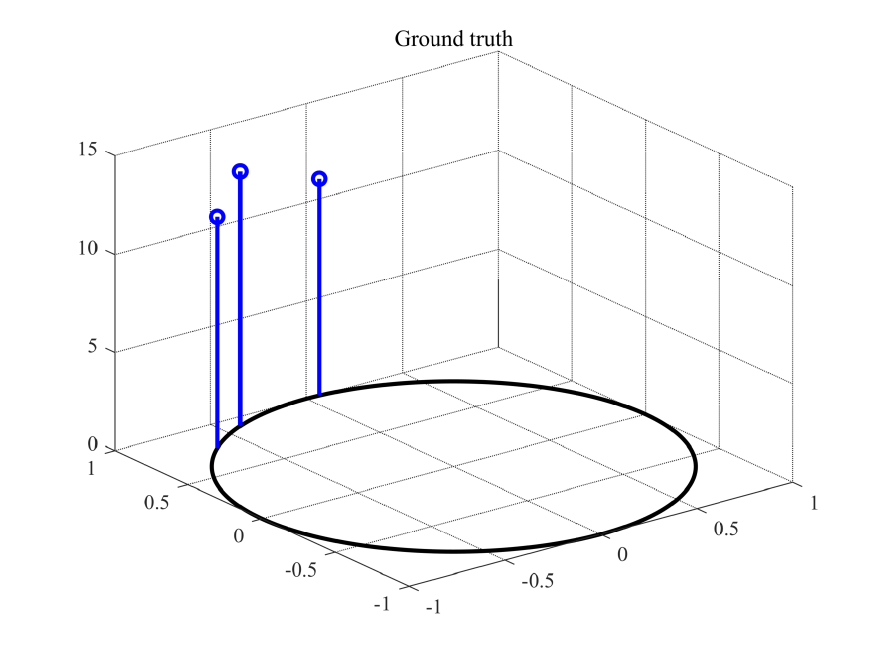}%
		\label{fig:plot3_m=24_k=3_psnr=0_truth}}
	\hfil
	\subfloat[]{\includegraphics[width=1.7in]{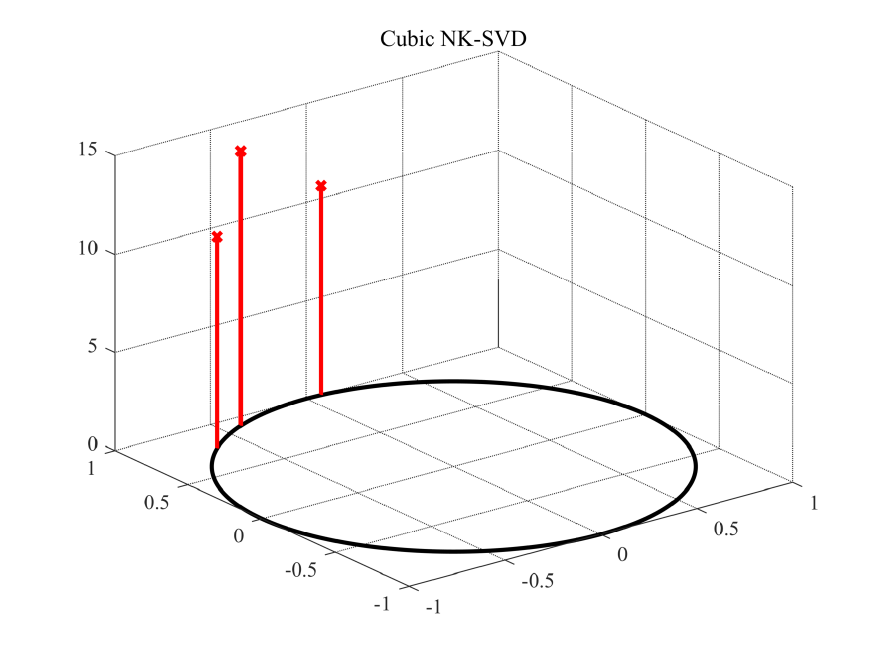}%
		\label{fig:plot3_m=24_k=3_psnr=0_nk_svd}}
	\hfil
	\subfloat[]{\includegraphics[width=1.7in]{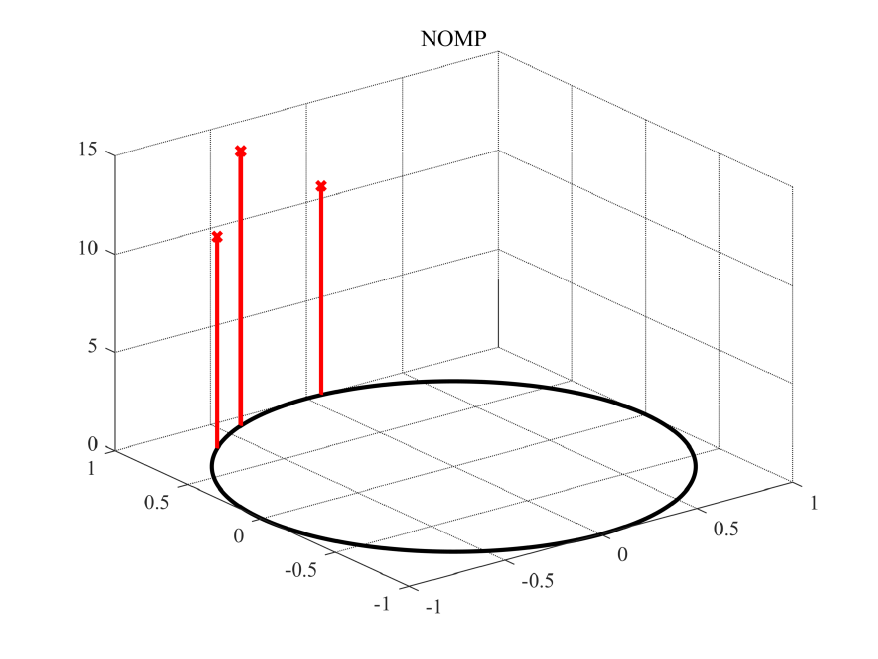}%
		\label{fig:plot3_m=24_k=3_psnr=0_nomp}}
	
	\subfloat[]{\includegraphics[width=1.7in]{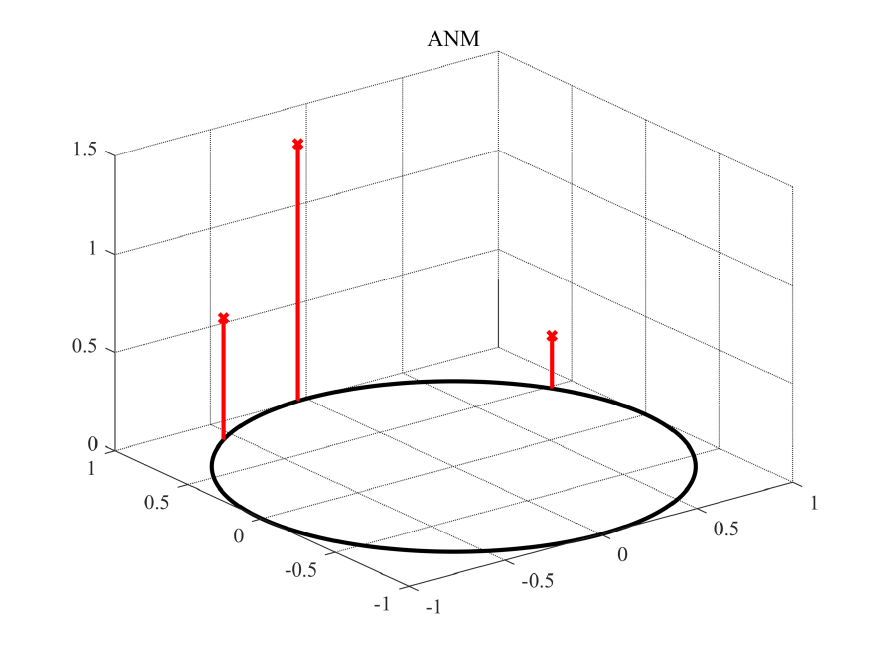}%
		\label{fig:plot3_m=24_k=3_psnr=0_anm}}
	\hfil
	\subfloat[]{\includegraphics[width=1.7in]{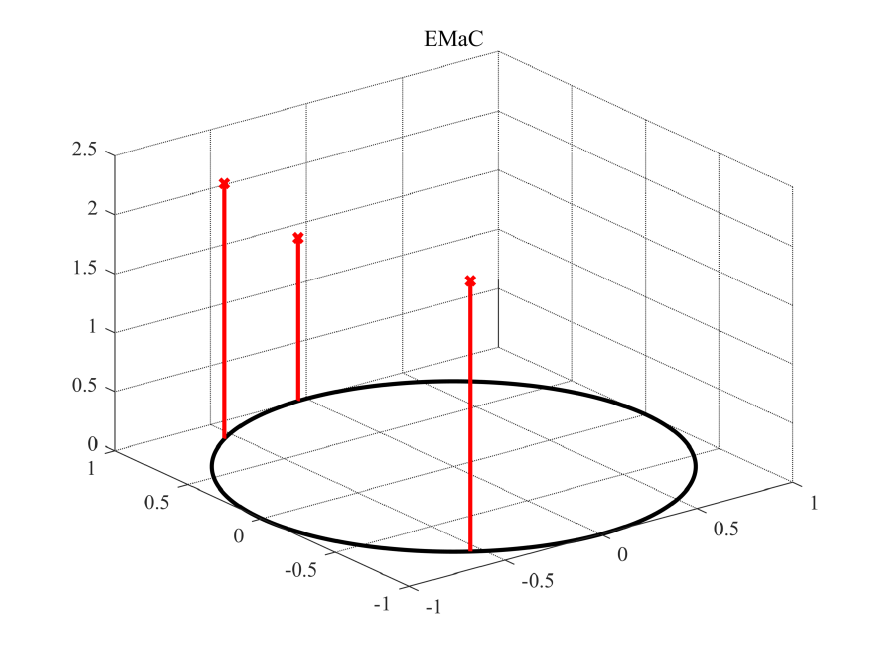}%
		\label{fig:plot3_m=24_k=3_psnr=0_emac}}
	\hfil
	\subfloat[]{\includegraphics[width=1.7in]{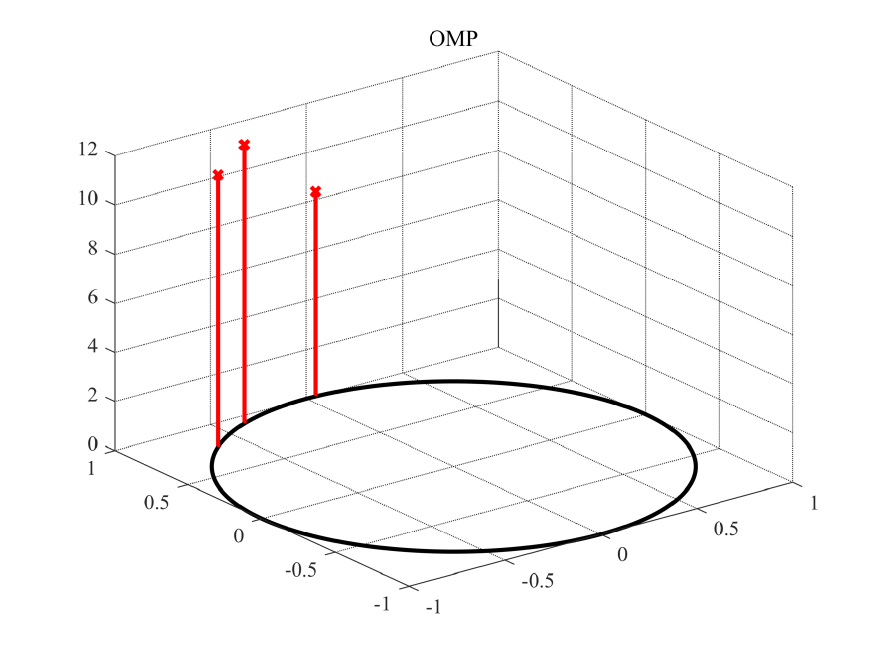}%
		\label{fig:plot3_m=24_k=3_psnr=0_omp}}
	
	\caption{Frequency estimation using different algorithms when $M=24$, $K=3$ and PSNR=0 dB. (a) Ground truth. (b) {Cubic NK-SVD}. (c) NOMP. (d) ANM. (e) EMaC. (f) OMP.}
	\label{fig:plot3_polar_m=24_k=3_psnr=0}
\end{figure}

\subsection{Frequency Estimation for SMV Model}

We investigate the performance of various algorithms in SMV scenarios, with $T=1$.
It is important to note that the effectiveness of many algorithms depends on minimum frequency separation, $\min_{k\neq l} |\theta_k - \theta_l|$,  between two sinusoids in the mixture, denoted as $\Delta \theta_{\text{min}}$.
According to \cite{anm-admm}, if $\Delta \theta_{\text{min}}$ exceeds {$4\pi/N$},
{the ANM approach} can exactly recover the frequencies in noise-free conditions.
We begin by consider an experimental setup where $\Delta \theta_{\text{min}}$ is guaranteed to be greater than {$4\pi/N$}. 
The sensing matrix $\boldsymbol{\Phi}$ is configured as a {random subsampling matrix},
meaning the measurements are obtained by randomly selecting $M$ entries from the $N$ elements of the signal.

We begin by evaluating the convergence behavior of the {cubic NK-SVD algorithm}.
In this experiment, we {set $M=32$, $K=3$, and PSNR=20 dB.}
The performance is assessed over 10 Monte Carlo trials.
Figure \ref{fig:residual-iterpsnr20dbm32k3} illustrates the {norm of residuals} in \eqref{eq:dl-problem} plotted against the number of iterations.
The results demonstrate that the residuals across all trials tend to converge, supporting the theoretical findings outlined in Theorem \ref{thm:critical}.

In Figure \ref{fig:rsnr_m_k=7_psnr=10}, we plot the average RSNRs of various algorithms as a function of the number of measurement, $M$, {with $K=7$ and PSNR=10 dB.}
The proposed method consistently demonstrates superior performance compared to the other four methods,
particularly excelling over the NOMP method when $M$ is small.
As $M$ increases, the recovery performance of the proposed method approaches the CRB.
In Figure \ref{fig:rsnr_k_m=32_psnr=10}, we present the average RSNRs vs. the number of frequencies, $K$, {with $M=32$ and PSNR=10 dB.}
It is evident that our method outperforms other algorithms, especially for larger values of $K$.
For instance, when $K=10$, our algorithm achieves an approximate 3 dB gain in RSNR compared to the NOMP and ANM methods.
Notably,for smaller values of $K$, the RSNR of the proposed algorithm exceeds the CRB because the cubic NK-SVD algorithm is biased.

In Figure \ref{fig:sr_k_m=32_psnr=10}, we present the {success rates} of various algorithms as a function of the the number of {frequencies}, $K$, where $M=32$ and PSNR is set to 10 dB.
The proposed method and the NOMP approach achieves significantly higher success rates compared to the gridless methods. This superiority is attributed to the gridless methods' sensitivity to noise, which leads to instability at lower PSNR levels. Additionally, the OMP method exhibits the poorest performance, further highlighting the critical need to address grid mismatch issues.

{Let $M=24$, $K=3$, and PSNR=0 dB.}
In Figure \ref{fig:plot3_m=24_k=3_psnr=0_truth}, we present a spectrally sparse ground truth scene.
{Figure \ref{fig:plot3_m=24_k=3_psnr=0_nk_svd}–\ref{fig:plot3_m=24_k=3_psnr=0_omp}} illustrate the estimated frequencies on a unit circle for various methods.
It is evident that the ANM and EMaC approaches exhibit poor frequency recovery in low PSNR scenarios. In contrast, the proposed method and NOMP approach demonstrate more accurate recovery. Notably, the performance of OMP is significantly compromised due to grid mismatch, preventing it from achieving super-resolution.

\begin{figure}[t]
	\centering
	\includegraphics[width=2.3in]{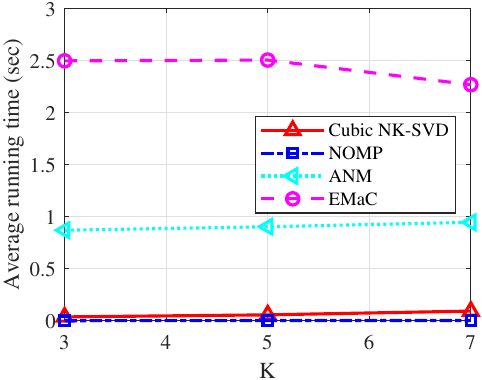}
	\caption{Average running times (sec) of respective algorithms when $M=24$ and PSNR=10 dB.}
	\label{fig:runtimekpsnr10dbm24}
\end{figure}

\begin{figure}[b]
	\centering
	\includegraphics[width=2.3in]{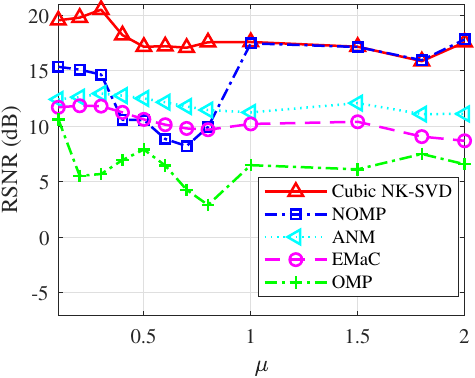}
	\caption{RSNRs of respective algorithms vs. $\mu$, $M=32$, $K=2$ and PSNR=10 dB.}
	\label{fig:rsnr-mupsnr10dbk2m32}
\end{figure}

Figure \ref{fig:runtimekpsnr10dbm24} presents the average running times of the various algorithms, with $M=24$ and PSNR=10 dB.
The results indicate that NOMP is the most computationally efficient algorithm.
The computational efficiency of cubic NK-SVD is notably higher than that of the gridless method \cite{anm,emac} and is comparable to NOMP. 
The performance of cubic NK-SVD is affected by the initial grid resolution $R$ and the oversampling factor $\gamma$. In practical applications, reducing the grid resolution can enhance computational efficiency without compromising accuracy.
Moreover, in MMV scenarios, integrating dimension reduction techniques \cite{xie_book} during the sparse coding stage can leverage the joint sparsity of the data while further reducing computational complexity. This strategy will be explored in our future work.

\begin{figure}[t]
	\centering
	\includegraphics[width=2.3in]{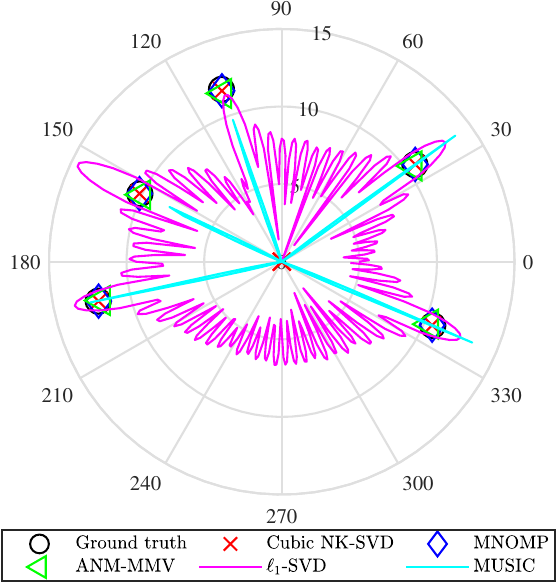}
	\caption{Angular spectra obtained using different algorithms when $M=64$, $K=5$ and PSNR=20 dB.}
	\label{fig:polark5m64t48psnr20}
\end{figure}

Finally, we evaluate the ability of various algorithms to recover closely-spaced frequencies.
We set $\Delta \theta_{\text{min}} = \mu \times 2\pi/N$, where $\mu$ is the frequency spacing coefficient.
Figure \ref{fig:rsnr-mupsnr10dbk2m32} illustrates the average RSNRs of different algorithms under varying $\mu$ values, with $M=32$, $K=2$, and PSNR set to 10 dB.
The results demonstrate that cubic NK-SVD significantly outperforms the other methods in this challenging scenario.
Notably, when $\mu<1$, the performance of NOMP becomes unstable and degrades significantly, whereas cubic NK-SVD maintains a substantial advantage. 
This performance discrepancy arises because NOMP, as a greedy algorithm, may approximate two closely spaced atoms with a single representative atom, particularly when $\mu$ is small. 
In contrast, cubic NK-SVD employs a predefined dictionary, enabling it to effectively exploit the interactions between atoms during iterations to achieve super-resolution.
Gridless methods struggle to accurately resolve frequency components in closely-spaced scenarios.
The OMP method exhibits the poorest performance in these scenarios, primarily due to grid mismatch. 
These experimental results highlight the superiority of cubic NK-SVD in handling the challenges associated with closely-spaced frequency recovery tasks.

\subsection{Frequency Estimation for MMV Model}

We evaluate the performance of {cubic NK-SVD} in the MMV scenarios.
In Figure \ref{fig:polark5m64t48psnr20}, the angular spectra of various algorithms are presented.
The performance of cubic NK-SVD is compared against MNOMP \cite{mnomp}, ANM-MMV \cite{anm-mmv}, $\ell_1$-SVD \cite{l1-svd}, and MUSIC \cite{music}.
As depicted, all methods accurately estimate the frequencies when
$M=64$, $\boldsymbol{\Phi}={\boldsymbol{I}}$, $T=48$, $K=5$, and PSNR=20 dB.
To further illustrate performance, we plot the $\beta(\boldsymbol{\theta},\boldsymbol{\hat{\theta}})$ of respective algorithms against PSNR in Figure \ref{fig:beta-psnrk5m64trial2}.
The results demonstrate that our approach significantly outperforms grid-based methods like $\ell_1$-SVD and MUSIC in achieving accurate frequency recovery, while maintaining performance comparable to ANM-MMV and MNOMP.
These findings highlight the advantages of addressing grid mismatch by operating directly in the continuous domain.

\begin{figure}[t]
	\centering
	\includegraphics[width=2.3in]{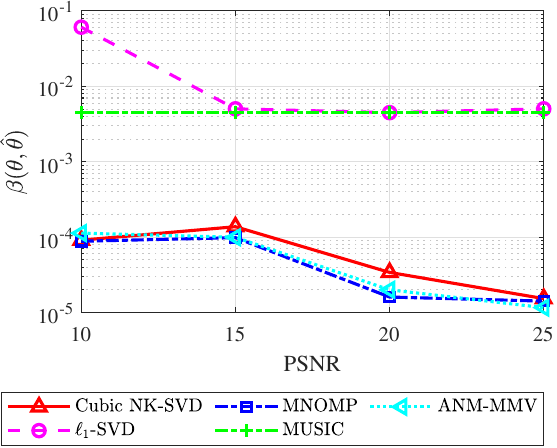}
	\caption{$\beta(\boldsymbol{\theta},\boldsymbol{\hat{\theta}})$ of respective algorithms vs. PSNR, $M=64$, $T=48$, and $K=5$.}
	\label{fig:beta-psnrk5m64trial2}
\end{figure}

\subsection{Real-World Dataset}

Our final experiment explores the recovery performance of Cubic NK-SVD using the widely adopted \texttt{NYUSIM} millimeter-wave (mmWave) channel dataset \cite{nyusim}. This dataset was developed to accurately model practical mobile systems operating at mmWave frequencies and serves as a benchmark for assessing estimation performance in wireless research. We consider a uniform linear array (ULA) channel signal in an urban microcell (UMi) non-line-of-sight (NLOS) scenario with a carrier frequency of 3.5 GHz.
The transmitter is equipped with $N_t=64$ antenna elements.
A random subsampling matrix is used as the sensing matrix $\boldsymbol{\Phi}$, and for each signal, $M=32$ samples are randomly selected. 
The PSNR is set to 15 dB.

Figure \ref{fig:nyusim} illustrates the true channel signal alongside the signals reconstructed by different algorithms.
As shown, the proposed Cubic NK-SVD algorithm achieves the best reconstruction performance, demonstrating a significant advantage over existing methods in this practical scenario.
Specifically, Cubic NK-SVD achieves an RSNR of 16.55 dB, outperforming NOMP (13.68 dB) and OMP (5.46 dB). These results highlight the superiority of our approach for real-world channel recovery applications.

\begin{figure}[t]
	\centering
	\subfloat[Ground truth]{\includegraphics[width=1.88in]{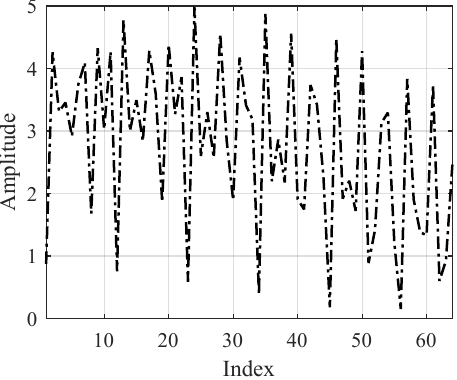}%
		\label{fig:nyusim_truth}}
	\hfil
	\subfloat[Cubic NK-SVD]{\includegraphics[width=1.88in]{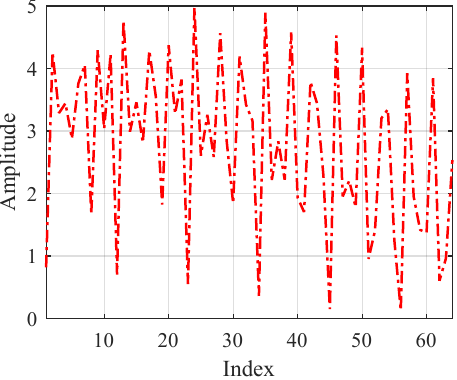}%
		\label{fig:nyusim_cubic}}
	
	\subfloat[NOMP]{\includegraphics[width=1.88in]{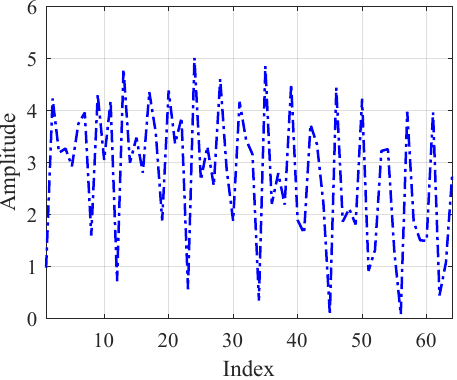}%
		\label{fig:nyusim_nomp}}
	\hfil
	\subfloat[OMP]{\includegraphics[width=1.88in]{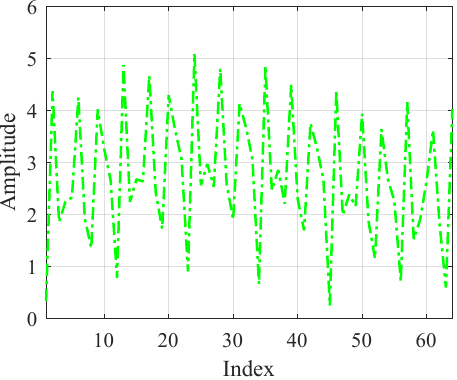}%
		\label{fig:nyusim_omp}}
	\caption{The true channel signal and the reconstructed signals obtained using different algorithms for $N_t=64$, $M=32$, and PSNR=15 dB.}
	\label{fig:nyusim}
\end{figure}

\section{Conclusion} \label{sec:conclusion}

In this paper, we addressed the LSE problem by leveraging the framework of parametric DL. We integrated the Cubic Newton refinement into the classic K-SVD algorithm, resulting in a novel parametric DL algorithm. 
This combination is natural and is thoroughly explained in our convergence analysis.
By building upon the BCD framework, we establish a rigorous convergence guarantee for our proposed cubic NK-SVD algorithm.
Our analysis not only solidifies the theoretical foundations of the proposed algorithm but also provides a flexible framework that can be extended to design other alternating minimization algorithms incorporating higher-order regularization. This opens up possibilities for applying our techniques to broader classes of BCD algorithms that utilize higher-order derivative information.
The proposed algorithm can be applied to both SMV and MMV scenarios, and an extension for its application in compressive scenarios is also provided.
Simulation results demonstrate that the proposed algorithm outperforms SOTA algorithms in various scenarios for DOA estimation.

\section*{Acknowledgement}
This work is supported by National Key Research and Development Program of China under grant 2021YFA1003303 and National Natural Science Foundation of China under grant 12171021.

\appendix
\section{{Convergence Analysis}}

\subsection{{Basic Properties}} \label{appendix:basic_property}

We begin by establishing fundamental properties of the cubic NK-SVD algorithm under Assumption \ref{assum:1}.

\begin{lem}[\cite{cubic_nesterov_06}]
	Let $h: \mathbb{R}^1 \rightarrow \mathbb{R}$ be a twice differentiable function with an $L_h$-Lipschitz continuous Hessian. Then, the following inequalities hold:
	$$
	\left|h^{\prime}(x_2)-h^{\prime}(x_1)-h^{\prime \prime}(x_1)(x_2-x_1)\right| \leq \frac{1}{2} L_h|x_2-x_1|^2,
	$$
	$$
	\left|h(x_2)-h(x_1)-\left\langle h^{\prime}(x_1), x_2-x_1\right\rangle -\frac{1}{2}\left\langle h^{\prime \prime}(x_1)(x_2-x_1), x_2-x_1\right\rangle\right| \leq \frac{L_h}{6}|x_2-x_1|^3.
	$$
\end{lem}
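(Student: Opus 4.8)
The statement to prove is a standard pair of inequalities for a twice-differentiable scalar function $h$ whose Hessian (here simply $h''$) is $L_h$-Lipschitz. The plan is to reduce both bounds to the fundamental theorem of calculus applied to the appropriate remainder term, exactly as in \cite{cubic_nesterov_06}.

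For the first inequality, I would write the gradient gap as an integral:
$$
h'(x_2) - h'(x_1) - h''(x_1)(x_2 - x_1) = \int_0^1 \bigl( h''(x_1 + \tau(x_2 - x_1)) - h''(x_1) \bigr)(x_2 - x_1)\, d\tau.
$$
Taking absolute values, pulling the modulus inside the integral, and applying the Lipschitz bound $|h''(x_1 + \tau(x_2-x_1)) - h''(x_1)| \le L_h \tau |x_2 - x_1|$ gives
$$
\left| h'(x_2) - h'(x_1) - h''(x_1)(x_2 - x_1) \right| \le L_h |x_2 - x_1|^2 \int_0^1 \tau\, d\tau = \tfrac{1}{2} L_h |x_2 - x_1|^2.
$$

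For the second inequality, I would apply the same idea one level up: express $h(x_2) - h(x_1) - \langle h'(x_1), x_2 - x_1\rangle - \tfrac12 \langle h''(x_1)(x_2-x_1), x_2-x_1\rangle$ via the integral form of Taylor's remainder, then use the already-established first inequality to bound the integrand. Concretely, with $\phi(\tau) = h(x_1 + \tau(x_2-x_1))$ one has $h(x_2) - h(x_1) - \langle h'(x_1), x_2-x_1\rangle = \int_0^1 (1-\tau)\,\phi''(\tau)\,d\tau$, and subtracting the Hessian term lets me replace $\phi''(\tau)$ by the difference $\langle (h''(x_1+\tau(x_2-x_1)) - h''(x_1))(x_2-x_1), x_2-x_1\rangle$; bounding this by $L_h \tau |x_2-x_1|^3$ and integrating $\int_0^1 (1-\tau)\tau\, d\tau = \tfrac16$ yields the claimed constant $L_h/6$. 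Since this is a verbatim restatement of a known lemma, there is no genuine obstacle here; the only point requiring a little care is bookkeeping the weights $(1-\tau)$ and $\tau$ in the two integral representations so that the final constants come out as $\tfrac12$ and $\tfrac16$ respectively.
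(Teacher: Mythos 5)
Your proof is correct and is exactly the standard argument from the cited reference \cite{cubic_nesterov_06}, which the paper itself does not reprove but simply cites; both integral representations and the resulting constants $\tfrac12$ and $\tfrac16$ check out. One minor wording slip: in the second part you say you ``use the already-established first inequality to bound the integrand,'' but the computation you actually describe bounds the Hessian difference directly by the Lipschitz property (giving $L_h\tau|x_2-x_1|^3$) rather than invoking the first inequality, which is the right move and makes the argument self-contained.
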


We define the modified Newton step using a cubic regularization of the quadratic approximation of the function as follows:
$$
T_{L_h}(x) \in \arg \min _{\tilde{x}} \left[ \left\langle h^{\prime}(x), {\tilde{x}}-x\right\rangle +\frac{1}{2}\left\langle h^{\prime \prime}(x)({\tilde{x}}-x), {\tilde{x}}-x\right\rangle +\frac{L_h}{6}|{\tilde{x}}-x|^3\right].
$$

The point $T_{L_h}(x)$ satisfies the following system of nonlinear equations:
$$
h^{\prime}(x)+h^{\prime \prime}(x)({\tilde{x}}-x)+\frac{1}{2} L_h|{\tilde{x}}-x| \cdot({\tilde{x}}-x)=0.
$$

Define
$$
\bar{h}_{L_h}(x) =  \min _{\tilde{x}} \left[ h(x) + \left\langle h^{\prime}(x), {\tilde{x}}-x\right\rangle + \frac{1}{2}\left\langle h^{\prime \prime}(x)({\tilde{x}}-x), {\tilde{x}}-x\right\rangle +\frac{L_h}{6}|{\tilde{x}}-x|^3\right].
$$

\begin{lem}[\cite{cubic_nesterov_06}] \label{lem:x_descent}
	For any $x \in \mathbb{R}^1$, the following holds:
	$$
	\bar{h}_{L_h}(x) \leq \min _{\tilde{x}} \left[h(\tilde{x})+\frac{L_h}{3}|\tilde{x}-x|^3\right],
	$$
	$$
	h(x)-\bar{h}_{L_h}(x) \geq \frac{L_h}{12} |T_{L_h}(x) - x|^3 .
	$$
	Moreover, we have
	$$
	h(T_{L_h}(x)) \leq \bar{h}_{L_h}(x).
	$$
\end{lem}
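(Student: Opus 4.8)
\textbf{Proof plan for Lemma~\ref{lem:x_descent}.} The plan is to follow the classical cubic-regularization argument of Nesterov--Polyak \cite{cubic_nesterov_06} specialized to the scalar case $h:\mathbb{R}^1\to\mathbb{R}$. The three claims should be handled in the order in which they build on each other: first the upper bound $\bar{h}_{L_h}(x)\le\min_{\tilde{x}}[h(\tilde{x})+\tfrac{L_h}{3}|\tilde{x}-x|^3]$, then the descent inequality $h(x)-\bar{h}_{L_h}(x)\ge\tfrac{L_h}{12}|T_{L_h}(x)-x|^3$, and finally $h(T_{L_h}(x))\le\bar{h}_{L_h}(x)$.

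For the first inequality I would use the first estimate of the preceding Lemma~(\cite{cubic_nesterov_06}), namely $|h(\tilde{x})-h(x)-\langle h'(x),\tilde{x}-x\rangle-\tfrac12\langle h''(x)(\tilde{x}-x),\tilde{x}-x\rangle|\le\tfrac{L_h}{6}|\tilde{x}-x|^3$. Rearranging gives, for every $\tilde{x}$,
$$
h(x)+\langle h'(x),\tilde{x}-x\rangle+\tfrac12\langle h''(x)(\tilde{x}-x),\tilde{x}-x\rangle+\tfrac{L_h}{6}|\tilde{x}-x|^3 \le h(\tilde{x})+\tfrac{L_h}{3}|\tilde{x}-x|^3,
$$
and taking the minimum over $\tilde{x}$ on both sides yields the claim, since the left-hand side minimized is exactly $\bar{h}_{L_h}(x)$. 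For the third inequality, I would again invoke the same estimate but now evaluated at $\tilde{x}=T_{L_h}(x)$: from $h(T_{L_h}(x))\le h(x)+\langle h'(x),T_{L_h}(x)-x\rangle+\tfrac12\langle h''(x)(T_{L_h}(x)-x),T_{L_h}(x)-x\rangle+\tfrac{L_h}{6}|T_{L_h}(x)-x|^3$, and the right-hand side is precisely the value of the minimized expression defining $\bar{h}_{L_h}(x)$ at its minimizer, i.e.\ equals $\bar{h}_{L_h}(x)$.

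The middle inequality is the main obstacle and requires the stationarity condition for $T_{L_h}(x)$. Writing $r=T_{L_h}(x)-x$, the optimality system $h'(x)+h''(x)r+\tfrac12 L_h|r|r=0$ lets me express $h(x)-\bar{h}_{L_h}(x) = -\langle h'(x),r\rangle-\tfrac12\langle h''(x)r,r\rangle-\tfrac{L_h}{6}|r|^3$. Substituting $h'(x)=-h''(x)r-\tfrac12 L_h|r|r$ gives $h(x)-\bar{h}_{L_h}(x)=\langle h''(x)r,r\rangle+\tfrac12 L_h|r|^3-\tfrac12\langle h''(x)r,r\rangle-\tfrac{L_h}{6}|r|^3=\tfrac12\langle h''(x)r,r\rangle+\tfrac{L_h}{3}|r|^3$. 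The remaining task is to bound $\langle h''(x)r,r\rangle$ from below; the standard argument uses the global-model lower bound coming from Hessian-Lipschitzness (the quadratic model plus cubic term lower-bounds $h$ shifted), which forces $\tfrac12\langle h''(x)r,r\rangle\ge-\tfrac{L_h}{4}|r|^3$, and combining yields $h(x)-\bar{h}_{L_h}(x)\ge\tfrac{L_h}{3}|r|^3-\tfrac{L_h}{4}|r|^3=\tfrac{L_h}{12}|r|^3$. I expect the delicate point to be justifying this lower bound on $\langle h''(x)r,r\rangle$ cleanly in the scalar setting; I would cite \cite{cubic_nesterov_06} for the matrix inequality $h''(x)+\tfrac12 L_h|r|I\succeq 0$ that follows from second-order optimality of the cubic subproblem, which immediately gives $\langle h''(x)r,r\rangle\ge-\tfrac12 L_h|r|^3$, hence $\tfrac12\langle h''(x)r,r\rangle\ge-\tfrac14 L_h|r|^3$, closing the argument.
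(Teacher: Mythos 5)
The paper does not prove this lemma at all---it is imported verbatim from \cite{cubic_nesterov_06}---and your argument correctly reconstructs the standard Nesterov--Polyak proof: the first and third inequalities follow from the cubic upper bound of the preceding lemma exactly as you describe, and the middle one from the stationarity system together with the second-order condition $h''(x)+\tfrac{1}{2}L_h|T_{L_h}(x)-x|\ge 0$, which indeed holds for the \emph{global} minimizer of the cubic subproblem (Proposition~1 in the cited reference). All three steps check out, so the proposal is correct and coincides with the source's own argument.
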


\begin{lem}[\cite{adaptation}] \label{lem:y_descent}
	Let $f: \mathbb{C}^T \rightarrow \mathbb{R}$ be a continuously differentiable, $\nu$-strongly convex function on $\mathbb{C}^T$, with $\nabla f(\boldsymbol{y}^*) = 0$.
	Then, 
	$$
	f(\boldsymbol{y}) \geq f(\boldsymbol{y}^*) + \frac{\nu}{2} \|\boldsymbol{y}-\boldsymbol{y}^*\|^2, \quad \forall \boldsymbol{y} \in \mathbb{C}^T.
	$$
\end{lem}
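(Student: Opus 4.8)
The final statement to prove is Lemma~\ref{lem:y_descent}: a $C^1$, $\nu$-strongly convex function $f:\mathbb{C}^T\to\mathbb{R}$ with $\nabla f(\boldsymbol{y}^*)=0$ satisfies $f(\boldsymbol{y})\ge f(\boldsymbol{y}^*)+\tfrac{\nu}{2}\|\boldsymbol{y}-\boldsymbol{y}^*\|^2$. The plan is to reduce the complex statement to the standard real-variable fact via the Wirtinger/real-embedding identification $\mathbb{C}^T\cong\mathbb{R}^{2T}$ used throughout the paper (cf.\ the citation \cite{adaptation}), so that $f$ becomes an ordinary real-valued function on $\mathbb{R}^{2T}$ whose (real) gradient vanishes at $\boldsymbol{y}^*$ and which is $\nu$-strongly convex in the sense of Assumption~\ref{assum:1}(iii), i.e.\ $f - \tfrac{\nu}{4}\|\cdot\|^2$ is convex (the factor coming from the $\tfrac{\nu}{2}\boldsymbol{I}_{2T}$ normalization of the Hessian).

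First I would recall the definition of $\nu$-strong convexity in a form that does not require twice differentiability: $g(\boldsymbol{y}) := f(\boldsymbol{y}) - \tfrac{\nu}{4}\|\boldsymbol{y}-\boldsymbol{y}^*\|^2$ is convex on $\mathbb{R}^{2T}$, and $g$ is $C^1$ with $\nabla g(\boldsymbol{y}) = \nabla f(\boldsymbol{y}) - \tfrac{\nu}{2}(\boldsymbol{y}-\boldsymbol{y}^*)$, so $\nabla g(\boldsymbol{y}^*) = \nabla f(\boldsymbol{y}^*) = 0$. Next I would invoke the first-order characterization of convexity for a differentiable function: a convex $C^1$ function lies above each of its tangent planes, hence $g(\boldsymbol{y}) \ge g(\boldsymbol{y}^*) + \langle \nabla g(\boldsymbol{y}^*), \boldsymbol{y}-\boldsymbol{y}^*\rangle = g(\boldsymbol{y}^*)$ for all $\boldsymbol{y}$. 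Unwinding the definition of $g$ then yields $f(\boldsymbol{y}) - \tfrac{\nu}{4}\|\boldsymbol{y}-\boldsymbol{y}^*\|^2 \ge f(\boldsymbol{y}^*)$, which is slightly weaker than claimed; the correct constant is recovered by using the sharper inequality that for a $\nu$-strongly convex function the Bregman divergence at the minimizer dominates $\tfrac{\nu}{2}\|\boldsymbol{y}-\boldsymbol{y}^*\|^2$ directly — equivalently, I would instead take $g(\boldsymbol{y}) = f(\boldsymbol{y}) - \tfrac{\nu}{2}\|\boldsymbol{y}-\boldsymbol{y}^*\|^2$ under the convention that $\nu$-strong convexity means $\nabla^2 f \succeq \nu \boldsymbol{I}$, and carry the same tangent-plane argument through. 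I would pick whichever normalization makes Assumption~\ref{assum:1}(iii) and the claimed bound consistent, and state the one intermediate inequality (convexity $\Rightarrow$ above tangent plane) as the sole analytic input.

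Alternatively, and perhaps cleaner for the write-up, I would integrate the gradient along the segment $\boldsymbol{y}_t = \boldsymbol{y}^* + t(\boldsymbol{y}-\boldsymbol{y}^*)$, $t\in[0,1]$: $f(\boldsymbol{y}) - f(\boldsymbol{y}^*) = \int_0^1 \langle \nabla f(\boldsymbol{y}_t), \boldsymbol{y}-\boldsymbol{y}^*\rangle\,dt = \int_0^1 \tfrac{1}{t}\langle \nabla f(\boldsymbol{y}_t) - \nabla f(\boldsymbol{y}^*), \boldsymbol{y}_t - \boldsymbol{y}^*\rangle\,dt$, and then bound the integrand below using the monotonicity inequality $\langle \nabla f(\boldsymbol{u}) - \nabla f(\boldsymbol{v}), \boldsymbol{u}-\boldsymbol{v}\rangle \ge \nu\|\boldsymbol{u}-\boldsymbol{v}\|^2$ that is equivalent to $\nu$-strong convexity of a $C^1$ function. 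This gives $f(\boldsymbol{y}) - f(\boldsymbol{y}^*) \ge \int_0^1 \nu t\|\boldsymbol{y}-\boldsymbol{y}^*\|^2\,dt = \tfrac{\nu}{2}\|\boldsymbol{y}-\boldsymbol{y}^*\|^2$, exactly as claimed.

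The argument is essentially textbook, so there is no deep obstacle; the only genuine point requiring care — and the step I expect to be the "hard part" — is the bookkeeping of the real-vs-complex conventions: ensuring that "$\nu$-strongly convex on $\mathbb{C}^T$" as used in Assumption~\ref{assum:1}(iii) (stated there as $(\nabla^2_{\boldsymbol{y}\boldsymbol{y}})_c H \succeq \tfrac{\nu}{2}\boldsymbol{I}_{2T}$) translates into the monotonicity constant that produces precisely $\tfrac{\nu}{2}\|\boldsymbol{y}-\boldsymbol{y}^*\|^2$ and not $\tfrac{\nu}{4}\|\cdot\|^2$ in the final bound, and that the Wirtinger gradient vanishing is equivalent to the real gradient vanishing. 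I would therefore state the convention explicitly at the start of the proof, cite \cite{adaptation} for the equivalence, and then present the one-line integration argument above.
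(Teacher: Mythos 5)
The paper does not actually prove this lemma: it is imported verbatim from \cite{adaptation} and stated without argument in Appendix A.1, so there is no in-paper proof to compare against. Your proposal supplies a correct, self-contained proof, and the second (integration) argument is the one to keep: writing $f(\boldsymbol{y})-f(\boldsymbol{y}^*)=\int_0^1\langle\nabla f(\boldsymbol{y}_t),\boldsymbol{y}-\boldsymbol{y}^*\rangle\,dt$, inserting $\nabla f(\boldsymbol{y}^*)=0$, and applying the monotonicity inequality $\langle\nabla f(\boldsymbol{u})-\nabla f(\boldsymbol{v}),\boldsymbol{u}-\boldsymbol{v}\rangle\ge\nu\|\boldsymbol{u}-\boldsymbol{v}\|^2$ gives exactly $\tfrac{\nu}{2}\|\boldsymbol{y}-\boldsymbol{y}^*\|^2$ with no loose constants. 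The first version of your argument, with its back-and-forth between $\tfrac{\nu}{4}$ and $\tfrac{\nu}{2}$, should be cut; the normalization question it raises is real but is settled by the paper itself. Appendix B computes $(\nabla^2_{\boldsymbol{y}\boldsymbol{y}})_c H=\boldsymbol{I}_{2T}$ and declares $\nu=2$, while the exact identity $H(\boldsymbol{y})-H(\boldsymbol{y}^*)=\|\boldsymbol{y}-\boldsymbol{y}^*\|^2$ for the quadratic objective \eqref{eq:H_doa} shows the claimed bound holds with equality at $\nu=2$; hence the convention in Assumption \ref{assum:1}(iii) is that the ``complex form'' of the Hessian is half the real Hessian on $\mathbb{R}^{2T}$, so $(\nabla^2_{\boldsymbol{y}\boldsymbol{y}})_c\succeq\tfrac{\nu}{2}\boldsymbol{I}_{2T}$ is precisely real-Hessian $\succeq\nu\boldsymbol{I}_{2T}$, i.e.\ monotonicity constant $\nu$, and your integral computation lands on the stated constant. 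State that identification once (with the real inner product $\Re\langle\cdot,\cdot\rangle$ on $\mathbb{C}^T\cong\mathbb{R}^{2T}$) and the proof is complete.
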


\begin{lem}[\cite{cubic_nesterov_06}] \label{lem:gradient_x}
	The point $T_{L_h}(x)$ satisfies
	\begin{equation}
		|h^{\prime}(T_{L_h}(x))| \leq L_h |T_{L_h}(x) - x|^2.
		\label{eq:x_gra_dec}
	\end{equation}
\end{lem}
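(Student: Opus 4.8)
The plan is to derive the bound directly from the first-order optimality condition that characterizes $T_{L_h}(x)$, combined with the Hessian-Lipschitz inequality stated in the first lemma of this appendix. Write $\bar{x} := T_{L_h}(x)$ for brevity. The starting point is the stationary equation recorded just above the statement,
$$
h'(x) + h''(x)(\bar{x}-x) + \frac{1}{2} L_h |\bar{x}-x|(\bar{x}-x) = 0,
$$
which gives an exact expression for the quadratic model's gradient at $\bar{x}$, namely $h'(x) + h''(x)(\bar{x}-x) = -\frac{1}{2} L_h |\bar{x}-x|(\bar{x}-x)$.

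Next I would decompose $h'(\bar{x})$ by adding and subtracting this quadratic-model gradient:
$$
h'(\bar{x}) = \bigl[\, h'(\bar{x}) - h'(x) - h''(x)(\bar{x}-x) \,\bigr] + \bigl[\, h'(x) + h''(x)(\bar{x}-x) \,\bigr].
$$
The first bracket is controlled by the Hessian-Lipschitz consequence $|h'(\bar{x}) - h'(x) - h''(x)(\bar{x}-x)| \le \frac{1}{2} L_h |\bar{x}-x|^2$; the second bracket, by the stationary equation, equals $-\frac{1}{2} L_h |\bar{x}-x|(\bar{x}-x)$, whose magnitude is exactly $\frac{1}{2} L_h |\bar{x}-x|^2$. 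Applying the triangle inequality then yields $|h'(\bar{x})| \le L_h |\bar{x}-x|^2$, which is precisely \eqref{eq:x_gra_dec}.

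This is essentially a two-line computation, so no genuine obstacle arises; the only point requiring care is the factor $\frac{1}{2}$ that appears when differentiating the cubic term $\frac{L_h}{6}|\cdot|^3$ (its derivative being $\frac{L_h}{2}|\cdot|\,(\cdot)$), which must match the coefficient in the stationary equation. Because $x$ is one-dimensional here, the stationarity condition is a genuine equality rather than an inclusion, so there is nothing delicate about the nonsmooth term $|\bar{x}-x|^3$.
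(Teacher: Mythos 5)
Your proof is correct and is exactly the standard argument from the cited reference \cite{cubic_nesterov_06}: the paper itself states this lemma without proof, but the two ingredients you use --- the first-order bound $|h'(x_2)-h'(x_1)-h''(x_1)(x_2-x_1)| \le \tfrac{1}{2}L_h|x_2-x_1|^2$ and the stationarity equation for $T_{L_h}(x)$ --- are precisely the facts the appendix records just above the statement, and your triangle-inequality decomposition combines them in the intended way. The bookkeeping of the two factors of $\tfrac{1}{2}$ is right, so nothing further is needed.
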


\subsection{Proof of Lemma \ref{lem:convergence}} \label{appendix:proof_lem_convergence}

\begin{proof}
	(i) Fix $k\geq0$. Under our Assumption \ref{assum:1}, the functions $x \rightarrow H(x,\boldsymbol{y})$ (with $\boldsymbol{y}$ is fixed) and $\boldsymbol{y} \rightarrow H(x,\boldsymbol{y})$ (with $x$ is fixed) are twice differentiable function with Hessian assumed $L_h$-Lipschitz continuous and $\nu$-strongly convex function, respectively.
	Using the iterative steps \eqref{eq:x_subproblem} and \eqref{eq:y_subproblem}, applying Lemma \ref{lem:x_descent} and \ref{lem:y_descent}, first with $h(\cdot):=H\left(\cdot, \boldsymbol{y}^k\right)$ and $L_h = L(\boldsymbol{y}^k)$, and secondly with $f(\cdot):=H\left(x^{k+1}, \cdot\right)$, we obtain successively
	$$
	H(x^{k},\boldsymbol{y}^k) - H(x^{k+1},\boldsymbol{y}^k) \geq \frac{L(\boldsymbol{y}^k)}{12} |x^{k+1} - x^k|^3,
	$$
	and
	$$
	H(x^{k+1},\boldsymbol{y}^k) - H(x^{k+1},\boldsymbol{y}^{k+1}) \geq \frac{{\nu}}{2} \|\boldsymbol{y}^{k+1} - \boldsymbol{y}^k\|^2.
	$$
	Adding the above two inequalities, we thus obtain for all $k \geq 0$,
	\begin{equation}
		H(\boldsymbol{z}^{k}) - H(\boldsymbol{z}^{k+1}) \geq \frac{L(\boldsymbol{y}^k)}{12} |x^{k+1} - x^k|^3 + \frac{{\nu}}{2} \|\boldsymbol{y}^{k+1} - \boldsymbol{y}^k\|^2.
		\label{eq:obj_decrease}
	\end{equation}
	From \eqref{eq:obj_decrease} it follows that the sequence $\{H(\boldsymbol{z}^k)\}_{k\in\mathbb{N}}$ is nonincreasing, and since $H$ is assumed to be bounded below (see Assumption \ref{assum:1}(i)), {it converges to some real number $\underline{H}$}. Moreover, using the facts that $L(\boldsymbol{y}^k)\geq \lambda^- >0$ (see Assumption \ref{assum:1}(iv)), we get for all $k\geq 0$:
	\begin{equation}
		\begin{aligned}
			H(\boldsymbol{z}^{k}) - H(\boldsymbol{z}^{k+1}) \geq \frac{\lambda^-}{12} |x^{k+1} - x^k|^3+ \frac{{\nu}}{2} \|\boldsymbol{y}^{k+1} - \boldsymbol{y}^k\|^2, \\
		\end{aligned}
		\label{eq:obj_decrease_2}
	\end{equation}
	and assertion (i) is proved.
	
	(ii) Let $N$ be a positive integer. Summing \eqref{eq:obj_decrease_2} from $k=0$ to $N-1$ we also get
	$$
	\sum_{k=0}^{N-1} \frac{\lambda^-}{12} \left|x^{k+1}-x^k\right|^3 + \frac{{\nu}}{2} \left\|\boldsymbol{y}^{k+1}-\boldsymbol{y}^k\right\|^2 \leq \left(H\left(\boldsymbol{z}^0\right)-H\left(\boldsymbol{z}^N\right)\right) \leq \left(H\left(\boldsymbol{z}^0\right)-\underline{H}\right).
	$$
	Taking the limit as $N \rightarrow \infty$ we abtain
	\begin{equation}
		\sum_{k=0}^{\infty} \frac{\lambda^-}{12} \left|x^{k+1}-x^k\right|^3 + \frac{{\nu}}{2} \left\|\boldsymbol{y}^{k+1}-\boldsymbol{y}^k\right\|^2 \leq \left(H\left(\boldsymbol{z}^0\right)-H\left(\boldsymbol{z}^N\right)\right) \leq \left(H\left(\boldsymbol{z}^0\right)-\underline{H}\right) .
		\label{eq:obj_decrease_inf}
	\end{equation}
	and hence 
	$$
	\lim _{k \rightarrow \infty}\left|x^{k+1}-x^k\right|=0,
	$$
	$$
	\lim _{k \rightarrow \infty}\left\|\boldsymbol{y}^{k+1}-\boldsymbol{y}^k\right\|=0.
	$$
	Therefore, when $k$ is sufficiently large, both $\left|x^{k+1}-x^k\right|$ and $\left\|\boldsymbol{y}^{k+1}-\boldsymbol{y}^k\right\|$ are less than or equal to 1. Without loss of generality, we assume $\left|x^{k+1}-x^k\right| \leq 1$, $\left\|\boldsymbol{y}^{k+1}-\boldsymbol{y}^k\right\| \leq 1$ hold for all $k \geq 0$.
	We thus obtain
	\begin{equation}
		\left\|\boldsymbol{y}^{k+1}-\boldsymbol{y}^k\right\|_3^3 \leq \left\|\boldsymbol{y}^{k+1}-\boldsymbol{y}^k\right\|^2, \quad \forall k \geq 0.
		\label{eq:y_cubic}
	\end{equation}
	Combining \eqref{eq:obj_decrease_inf} and \eqref{eq:y_cubic} yields the following
	$$
	\sum_{k=0}^{\infty} \rho_1 \left|x^{k+1}-x^k\right|^3 + \rho_1 \left\|\boldsymbol{y}^{k+1}-\boldsymbol{y}^k\right\|_3^3
	=\sum_{k=0}^{\infty} \rho_1 \left\|\boldsymbol{z}^{k+1}-\boldsymbol{z}^k\right\|_3^{3} \leq \left(H\left(\boldsymbol{z}^0\right)-\underline{H}\right),
	$$
	where $\rho_1 = \min \{\frac{\lambda^-}{12}, \frac{\nu}{2}\}$. We obtain the desired assertion (ii).
\end{proof}

\subsection{Proof of Lemma \ref{lem:gradient_gap}} \label{appendix:proof_gradient_gap}

\begin{proof}
	Let $k$ be a positive integer. 
	In view of inequality \eqref{eq:x_gra_dec}, we have
	$$
	\left\|\nabla_x H(x^{k+1},\boldsymbol{y}^{k}) \right\| \leq L(\boldsymbol{y}^k) \left|x^{k+1} - x^{k}\right|^2.
	$$
	Writing down the optimality condition for \eqref{eq:y_subproblem} yields
	\begin{equation}
		\|\nabla_{\boldsymbol{y}} H(x^{k+1},\boldsymbol{y}^{k+1}) \| = 0.
		\label{eq:lem_grad_y}
	\end{equation}
	Note that $H(x,\boldsymbol{y})$ is defined in \eqref{eq:H_doa}, we have
	\begin{equation}
		\begin{aligned}
			& \|\nabla_x H(x^{k+1},\boldsymbol{y}^{k+1})  - \nabla_x H(x^{k+1},\boldsymbol{y}^{k}) \| \\
			& = \left|2 \Re \left\{(\boldsymbol{y}^{k+1} - \boldsymbol{y}^k)^H E^H \frac{\partial  \boldsymbol{a}(x^{k+1})}{\partial x}\right\} \right| \\
			& = \left|2 \Re  \left\{(\boldsymbol{y}^{k+1} - \boldsymbol{y}^k)^H \boldsymbol{y}^{k+1}\right\}  \right|.
		\end{aligned}
		\label{eq:lem_grad_z}
	\end{equation}
	Then,
	$$
	\left\|\nabla_x H(x^{k+1},\boldsymbol{y}^{k+1}) \right\| \leq \left\|\nabla_x H(x^{k+1},\boldsymbol{y}^{k}) \right\| + 2\left\| (\boldsymbol{y}^{k+1} - \boldsymbol{y}^k)\right\| \left\| \boldsymbol{y}^{k+1} \right\|.
	$$
	Since we assumed that $\{\boldsymbol{z}^k\}_{k\in\mathbb{N}}$ is bounded, we get that
	$$
	\left\|\boldsymbol{y}^k\right\| \leq \left\|\boldsymbol{z}^k\right\| \leq \sigma,\quad \forall k \in \mathbb{N}.
	$$
	The moduli  $L(\boldsymbol{y}^k)$ being bounded from above by $\lambda^+$ (see Assumption \ref{assum:1}(iv)).
	In view of inequality \eqref{eq:lem_point_conver}, we get that when $k$ is sufficiently large, $\left|x^{k+1}-x^k\right|$ is less than or equal to 1. Without loss of generality, we assume $\left|x^{k+1}-x^k\right| \leq 1, \forall k \geq 0$.
	Summing up these inequalities, we get
	\begin{equation}
		\begin{aligned}
			\left\|\nabla_x H(x^{k+1},\boldsymbol{y}^{k+1}) \right\| \leq &  \lambda^+ \left\|x^{k+1} - x^{k}\right\|^2 + 2\sigma\left\| (\boldsymbol{y}^{k+1} - \boldsymbol{y}^k)\right\| \\
			\leq &  \lambda^+ \left\|x^{k+1} - x^{k}\right\| + 2\sigma\left\| (\boldsymbol{y}^{k+1} - \boldsymbol{y}^k)\right\| \\
			\leq &  \rho_2 \left\|x^{k+1} - x^{k}\right\| + \rho_2 \left\| (\boldsymbol{y}^{k+1} - \boldsymbol{y}^k)\right\| \\
			\leq &  \rho_2 \left\|\boldsymbol{z}^{k+1} - \boldsymbol{z}^{k}\right\|.
		\end{aligned}
		\label{eq:lem_grad_x_x}
	\end{equation}
	which is the desired result in \eqref{eq:lem_grad_x}.
	Then, in view of inequality \eqref{eq:lem_grad_y} and \eqref{eq:lem_grad_x_x}, we get the estimate \eqref{eq:lem_grad_z}.
	This completes the proof.
\end{proof}

\section{Properties of the Function in \eqref{eq:H_doa}} \label{appendix:property_H}

In this section, we demonstrate that the function $H(x,{\boldsymbol{y}})$ in \eqref{eq:H_doa} satisfies Assumption \ref{assum:1}.
It is evident that $H(x,{\boldsymbol{y}})\geq 0$.

Notice that
$$
\begin{aligned}
	&\nabla_xH(x,{\boldsymbol{y}}) = -2\Re\{{\boldsymbol{y}^H}\boldsymbol{E}^H{\frac{d \boldsymbol{a}}{d x}}\},\\
	&\nabla^2_{xx}H(x,{\boldsymbol{y}}) = -2\Re\{{\boldsymbol{y}^H}\boldsymbol{E}^H{\frac{d^2 \boldsymbol{a}}{d x^2}}\} = -2\Re\{{\boldsymbol{y}^H}\boldsymbol{E}^H{\boldsymbol{D}^2}\boldsymbol{a}(x)\}.
\end{aligned}
$$
where ${\frac{d \boldsymbol{a}}{d x}} = \boldsymbol{D}\boldsymbol{a}(x)$ and $\boldsymbol{D} = {\operatorname{Diag}}(0,j,2j,\cdots,(N-1)j)$.
Thus,
\begin{equation}
	\begin{aligned}
		|\nabla^2_{xx}H(x_1,{\boldsymbol{y}}) - \nabla^2_{xx}H(x_2,{\boldsymbol{y}})| &= 2\Re\{{\boldsymbol{y}^H}\boldsymbol{E}^H{\boldsymbol{D}^2}(\boldsymbol{a}(x_1) - \boldsymbol{a}(x_2))\} \\
		&\leq 2\|{\boldsymbol{D}^2}\boldsymbol{E}\boldsymbol{{y}}\|\|\boldsymbol{a}(x_1)-\boldsymbol{a}(x_2)\|.
	\end{aligned}
	\label{eq:hessian_bound}
\end{equation}
Let $g(x) = \frac{1}{jk}e^{jkx}$.
Since $|\frac{d^2}{dx^2}g(x)| \leq k$, $g(x)$ is {Lipschitz continuous} with moduli $L=k$. Therefore,
$$
|\frac{d}{dx}g(x_1) - \frac{d}{dx}g(x_2)|=|e^{jkx_1}-e^{jkx_2}| \leq k |x_1-x_2|, \forall x_1,x_2\in \mathbb{R}.
$$
Then,
\begin{equation}
	\begin{aligned}
		\|\boldsymbol{a}(x_1)-\boldsymbol{a}(x_2)\|^2 &= \frac{1}{N} \sum_{k=1}^{N-1} (e^{jkx_1}-e^{jkx_2})^2 \leq \frac{1}{N} \sum_{k=1}^{N-1} k^2 (x_1 - x_2)^2\\
		&=\frac{(N-1)(2N-3)}{6} (x_1-x_2)^2.
	\end{aligned}
	\label{eq:a_bound}
\end{equation}
From \eqref{eq:hessian_bound} and \eqref{eq:a_bound}, we obtain
$$
|\nabla^2_{xx}H(x_1,{\boldsymbol{y}}) - \nabla^2_{xx}H(x_2,{\boldsymbol{y}})| \leq L(\boldsymbol{{y}}) |x_1-x_2|, \forall x_1,x_2\in \mathbb{R},
$$
where $L(\boldsymbol{{y}}) = \sqrt{\frac{2(N-1)(2N-3)}{3}}\|{\boldsymbol{D}^2}\boldsymbol{E}\boldsymbol{{y}}\|$.
This indicates that the function $x \rightarrow H(x,\boldsymbol{y})$ is $C_{L(\boldsymbol{y})}^{2,2}$.

For any fixed $x$, we have
$$
{(\nabla^2_{\boldsymbol{y}\boldsymbol{y}})_c H(x,\boldsymbol{y})} = {\boldsymbol{I}_{2T}} \succ {\boldsymbol{0}},
$$
which indicates that the function $\boldsymbol{y} \rightarrow H(x,\boldsymbol{y})$ is $\nu$-strongly convex with $\nu=2$.

According to \cite{bcd_bolte_14},
consider a function $H(x,\boldsymbol{y})$ whose partial Hessian $\nabla^2_{xx}H(x,\boldsymbol{y})$ is Lipschitz continuous.
Let $\mu^-$ be an arbitrary positive constant, and replace the Lipschitz modulus $L(\boldsymbol{y})$ by $L^{\prime}(\boldsymbol{y}) = \max\{L(\boldsymbol{y}),\mu\}$.
Then, $L^{\prime}(\boldsymbol{y})$ remains a Lipschitz modulus for $\nabla^2_{xx}H(x,\boldsymbol{y})$.
Consequently, the inequality in \eqref{eq:lip_lower_bound} is trivially satisfied with $L^{\prime}(\boldsymbol{y})$ and $\lambda^{-} = \mu^{-}$.
Under the assumption that the sequence $\{\boldsymbol{z}^k\}_{k\in\mathbb{N}}$ generated from the proposed algorithm is bounded, the inequality in \eqref{eq:lip_upper_bound} holds because $L(\boldsymbol{{y}})$ is upper-bounded by $\|\boldsymbol{{y}}\|$ with constant scaling.

\bibliographystyle{elsarticle-num} 
\bibliography{mybib}

\end{document}